\newcommand{\g}{\mathfrak{g}}
\newcommand{\Orb}{\mathbb{O}}
\newcommand{\K}{\mathbb{K}}
\newcommand{\Walg}{\mathcal{W}}
\newcommand{\Str}{\mathcal{O}}
\newcommand{\Dcal}{\mathcal{D}}
\newcommand{\Der}{\operatorname{Der}}
\newcommand{\U}{\mathcal{U}}
\newcommand{\gr}{\operatorname{gr}}
\newcommand{\Spec}{\operatorname{Spec}}
\newcommand{\Quant}{{\mathcal{Q}}}
\newcommand{\A}{\mathcal{A}}
\newcommand{\F}{\operatorname{F}}
\renewcommand{\sl}{\mathfrak{sl}}
\newcommand\ad{\operatorname{ad}}
\newcommand\Z{\mathbb{Z}}
\newcommand\SL{\operatorname{SL}}
\newcommand\Aut{\operatorname{Aut}}
\newcommand\z{\mathfrak{z}}
\newcommand\q{\mathfrak{q}}
\newcommand\m{\mathfrak{m}}
\newcommand\im{\operatorname{im}}
\newcommand\W{\mathbf{A}}
\newcommand\VA{\operatorname{V}}
\newcommand\Id{\mathfrak{Id}}
\newcommand\M{\mathcal{M}}
\newcommand\Jet{\operatorname{J}}
\newcommand\Dsh{\mathfrak{D}}
\newcommand\Ush{\mathfrak{U}}
\newcommand\J{\mathcal{J}}
\newcommand\I{\mathcal{I}}
\newcommand\Ish{\mathfrak{I}}
\newcommand\Hom{\operatorname{Hom}}
\newcommand\lf{\mathfrak{l}}
\newcommand\h{\mathfrak{h}}
\newcommand\Ann{\operatorname{Ann}}
\renewcommand\t{\mathfrak{t}}
\newcommand\KF{\operatorname{F}}
\newtheorem{Thm}{Theorem}[section]
\newtheorem{Prop}[Thm]{Proposition}
\newtheorem{Cor}[Thm]{Corollary}
\newtheorem{Lem}[Thm]{Lemma}
\theoremstyle{definition}
\newtheorem{Rem}[Thm]{Remark}
\numberwithin{equation}{section}
\title{Quantizations of nilpotent orbits vs 1-dimensional representations of W-algebras}
\author{Ivan Losev}
\thanks{Key words: Nilpotent orbits, Deformation quantization, Dixmier algebras, W-algebras,
1-dimensional modules}
\thanks{{\it 2000 Mathematics Subject Classification.} 17B35, 53D55}
\thanks{Supported by the NSF grant DMS-0900907}
\thanks{Address: MIT, Dept. of Math., 77 Massachusetts Avenue, Cambridge MA02139, USA}
\thanks{E-mail: ivanlosev@math.mit.edu}
\begin{document}
\begin{abstract}
Let $\g$ be a semisimple Lie algebra over an algebraically closed field $\K$
of characteristic 0 and $\Orb$ be a nilpotent orbit in $\g$. Then $\Orb$ is a symplectic
algebraic variety and one can ask whether it is possible to quantize $\Orb$ (in an appropriate sense)
and, if so, how to classify the quantizations. On the other hand, for the pair $(\g,\Orb)$ one can construct an associative algebra $\Walg$ called a (finite) W-algebra. The goal of this paper is to clarify a relationship between quantizations of $\Orb$ (and of its coverings)  and 1-dimensional $\Walg$-modules. In the first approximation, our result  is that there is a one-to-one correspondence between the two.  The result is not new: it was discovered
(in a different form) by Moeglin in the 80's.
\end{abstract}
\maketitle
\markright{QUANTIZATIONS OF NILPOTENT ORBITS VS W-ALGEBRAS}
\section{Introduction}
We fix a base field $\K$ which is assumed to be algebraically closed and of characteristic 0.
Let $\g$ be a semisimple Lie algebra, $G$  the corresponding simply connected algebraic group,
and $\Orb\subset\g\cong \g^*$  a nilpotent orbit. The variety $\Orb$ is symplectic with respect to the Kostant-Kirillov form.  So one can pose the problem of quantizing $\Orb$.

The purpose of this paper is to relate two types of objects:
\begin{itemize}
\item Some special quantizations of $\Orb$ and, more generally, of its $G$-equivariant  coverings,
\item 1-dimensional representations of a so called {\it W-algebra} $\Walg$ constructed
from $\g$ and $\Orb$.
\end{itemize}

There are several (related but different) notions of quantizations.
In this paper we are mostly going to deal with Deformation quantization in the algebro-geometric
setting. So our quantization $\Dcal$ of $\Orb$ will be a formal deformation of the
structure sheaf $\Str_{\Orb}$ of Poisson algebras. We will require quantizations
to be compatible with $G$ and $\K^\times$-actions on $\Orb$, the $\K^\times$-action on $\Orb$ being induced from the usual action on $\g$ by multiplications by scalars. All necessary definitions will be given in Subsection \ref{SUBSECTION_quant_generalities}.
Apart from nilpotent orbits themselves we will also consider their $G$-equivariant coverings.
For a $G$-equivariant covering $X$ of $\Orb$ we denote the set of isomorphism classes of
quantizations (satisfying the conditions mentioned above)  of $\Orb$ by $\Quant(X)$.

Deformation quantization has been studied extensively  starting from the seminal
paper \cite{BFFLS} mostly in the $C^\infty$-setting. Some work was done in the algebraic setting,
see, e.g., \cite{Kontsevich}, \cite{BK}, \cite{Ye}. In particular, in \cite{BK} it was shown
that a symplectic algebraic variety $X$ can be quantized provided it satisfies some vanishing-like
condition on the cohomology groups $H^i(X,\Str_X)$. Unfortunately, nilpotent orbits and their
coverings are very far from satisfying these conditions.


The second object we are interested in is 1-dimensional modules over a W-algebra
$\Walg$. Here we  give a very brief reminder on  W-algebras,  more details  will be given in Subsection \ref{SUBSECTION_W_generalities}.

Choose an element $e\in \Orb$ and an $\sl_2$-triple $(e,h,f)$. Recall the Slodowy slice
$S:=e+\ker \ad(f)$. The algebra $\K[S]$ of regular functions on $S$ has a natural grading
called the {\it Kazhdan grading}. A W-algebra is a particularly nice filtered algebra $\Walg$
with $\gr \Walg=\K[S]$. One of its features which will be important for us is that the group
$Q:=Z_G(e,h,f)$ acts on $\Walg$ by automorphisms.
Let $\Id^1(\Walg)$ denote the set of two-sided ideals of codimension 1 in $\Walg$, this set is naturally
identified with the set of isomorphism classes of 1-dimensional $\Walg$-modules). The group $Q$ acts naturally
$\Id^1(\Walg)$. It turns out that this action descends to the component group
$C(e):=Q/Q^\circ$.

The  main result of this paper is the following theorem.

\begin{Thm}\label{Thm:main1}
Let $\widetilde{\Orb}$ be a $G$-equivariant covering of $\Orb$. Pick a point $x\in \widetilde{\Orb}$
lying over $e$ and set $\Gamma=G_x/(G_x)^\circ$. Then
$\Quant(\widetilde{\Orb})$ is in bijection with the set $\Id^1(\Walg)^\Gamma$ of
$\Gamma$-fixed points in  $ \Id^1(\Walg)$.
\end{Thm}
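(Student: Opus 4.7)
The plan is to construct mutually inverse maps between $\Quant(\widetilde\Orb)$ and $\Id^1(\Walg)^\Gamma$ via a restriction-to-slice procedure on the quantization side, and a globalization procedure going back.

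For the forward map, fix a preimage $x \in \widetilde\Orb$ of $e$, and let $\Dcal \in \Quant(\widetilde\Orb)$. Using the Kazhdan $\K^\times$-action (which makes $x$ an attracting fixed point of a suitable Zariski neighborhood of the orbit $G\cdot x$) together with a Darboux--Kaledin-style decomposition theorem for formal neighborhoods in the $G$-equivariant symplectic setting, I would identify the formal completion of $\Dcal$ along $G\cdot x$ with a completed tensor product $\W_V \widehat\otimes \Walg^\wedge$, where $\W_V$ is the Weyl algebra quantizing the symplectic normal $V$ to the orbit at $x$ and $\Walg^\wedge$ is a completion of the W-algebra at $e$. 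Taking the $\W_V$-centralizer of this decomposition, combined with the bounded-below Kazhdan grading, produces a one-dimensional $\Walg$-module whose annihilator $I_\Dcal \in \Id^1(\Walg)$ is $\Gamma$-invariant: $(G_x)^\circ$ acts only on the Weyl algebra factor (by rigidity of $\Walg^\wedge$), while $\Gamma = G_x/(G_x)^\circ$ acts on $\Walg^\wedge$ via its natural embedding in $C(e)$.

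For the inverse map, given $I \in \Id^1(\Walg)^\Gamma$, I would build the local model $\W_V \widehat\otimes (\Walg/I)^\wedge$; the $\Gamma$-invariance is exactly what is needed to upgrade this to a $G_x$-equivariant quantization of the formal neighborhood of $x$. $G$-induction from $G_x$ then gives a $G$-equivariant quantization of the formal neighborhood of $G\cdot x$ in $\widetilde\Orb$. To extend from this formal neighborhood to the whole variety, I would invoke the $\K^\times$-contracting action: every point of $\widetilde\Orb$ flows under the Kazhdan $\K^\times$ into the fiber over $e$, so $\K^\times$-equivariance forces a unique global extension.

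The main obstacle I expect is precisely this globalization step, together with the verification that the two maps are inverse. Since $H^i(\widetilde\Orb, \Str)$ does not satisfy the usual Bezrukavnikov--Kaledin vanishing, the cohomological arguments must be replaced by the weight decomposition under the contracting Kazhdan $\K^\times$-action: obstruction classes in the relevant equivariant Hochschild/Poisson cohomology should live in $\K^\times$-weight pieces that vanish for grading reasons. Once this is established, the round-trip identities follow, after completion at $x$, from the uniqueness in the Kaledin decomposition theorem that defines the forward map.
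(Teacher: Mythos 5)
Your overall strategy --- match a quantization with a codimension-one ideal of $\Walg$ via the decomposition $\W^\wedge_\hbar\widehat{\otimes}\Walg^\wedge_\hbar$, and go back by inducing from the stabilizer $H=G_x$ --- is the paper's strategy, but your two central steps do not work as stated. For the forward map: the formal completion of $\Dcal$ at $x$ (or along any subvariety of $\widetilde{\Orb}$) cannot be identified with $\W_V\widehat{\otimes}\Walg^\wedge$. The sheaf $\Dcal$ quantizes a variety of dimension $\dim V=\dim\Orb$, so its formal fiber is a single formal Weyl algebra of that rank with no $\Walg$-factor; the object that decomposes as $\W^\wedge_\hbar\widehat{\otimes}\Walg^\wedge_\hbar$ is the completion $\U^\wedge_\hbar$ of $U(\g)$ at $\chi$ (Proposition \ref{Prop:decomposition}), which quantizes a formal neighborhood in $\g^*$, not in the orbit. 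The missing idea is the bridge from $\U^\wedge_\hbar$ to $\Dcal$: the quantum comoment map, whose existence and uniqueness must first be established (Proposition \ref{Prop:qcm_exist}, using $H^1_{DR}(X)=0$), and which is then promoted to a surjection $\Jet^\infty\U_\hbar\twoheadrightarrow\Jet^\infty\Dcal$ of quantum jet bundles (Lemma \ref{Lem:surjectivity}, using that $\mu$ is unramified). The fiber of its kernel at $x$ is an $H$-stable $\hbar$-saturated ideal of $\U^\wedge_\hbar$, and only then does the decomposition theorem, via Lemma \ref{Lem:maps_btw_ideals2} and Lemma \ref{Lem:maps_btw_ideals}, convert it into an element of $\Id^1(\Walg)^\Gamma$. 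Your ``take the $\W_V$-centralizer'' is applied to an object that does not exist.

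For the inverse map and the globalization: since $\widetilde{\Orb}=G/H$ is homogeneous, $G\cdot x$ is already the whole variety, so there is no extension problem, and your proposed mechanism for it rests on a false premise --- the $\K^\times$-action on $\widetilde{\Orb}$ covers $\alpha\mapsto t^{-2}\alpha$ on $\Orb$ and contracts toward the cone point $0\notin\Orb$, not into the fiber over $e$; no obstruction-theoretic or Hochschild-cohomological argument is needed or used. What does require an argument, and is absent from your proposal, is how the induced pro-coherent sheaf $\pi_*(\Str_G\widehat{\otimes}\,\U^\wedge_\hbar/\I^{\#})^H$ of complete local algebras is turned back into a quantization of $\Str_X$ in the sense of Subsection \ref{SUBSECTION_quant_generalities}. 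The paper does this through the flat connection: the ideal generated by $\I^{\#}$ in $\Jet^\infty\U_\hbar$ is automatically $\widetilde{\nabla}$-stable because it is $G$-stable, two-sided and $\hbar$-saturated (Lemma \ref{Lem:Ush}), so the quotient is a quantum jet bundle, and passing to flat sections recovers a genuine quantization by the equivalence of Proposition \ref{Prop:jet1} and Corollary \ref{Cor:equiv}. Without the jet-bundle formalism (or an equivalent descent mechanism) your construction stops at a family of formal models and never produces the sheaf $\Dcal$, nor can the round-trip identities be checked.
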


One-dimensional $\Walg$-modules have been studied extensively during the last few years see
\cite{GRU}, \cite{Wquant},\cite{Miura}, \cite{Premet2}, \cite{Premet4}. We will recall these results
in Subsection \ref{SUBSECTION_W_onedim}.

It turns out that Theorem \ref{Thm:main1} is not essentially new: a closely related result was obtained by
Moeglin in \cite{Moeglin2}. She used a different language to speak about quantizations, namely,
the language of {\it Dixmier algebras}. Also she dealt with so called {\it Whittaker modeles}
of primitive ideals in $U(\g)$ rather than with 1-dimensional $\Walg$-modules. We will
explain a relation of Moeglin's work to ours in Subsection \ref{SUBSECTION_Moeglin}.
Our proof is different from Moeglin's and (at least, from the author point of view)
easier and more natural. It is based on the construction of the  {\it jet bundle} of a quantization
and also on the decomposition theorem from \cite{Wquant}. The latter is a very basic result
about the relationship between the universal enveloping algebras and W-algebras. 

This paper is organized as follows. We list the notation
used in the paper in Section \ref{SECTION_Notation}. In Section \ref{SECTION_Def_quant}
we provide some preliminary results on Deformation quantization. In Section \ref{SECTION_W}
we recall some known results and constructions related to W-algebras. Finally,
in Section \ref{SECTION_proofs} we prove the main theorem as well as some other related results.
In the beginning of each section its content is described in more detail.

{\bf Acknowledgements.} I would like to thank V. Dolgushev, P. Etingof and D. Vogan for useful discussions.

\section{Notation}\label{SECTION_Notation}
\begin{longtable}{p{3cm} p{12cm}}
$\widehat{\otimes}$&the completed tensor product of complete topological vector spaces/ modules.\\
$\Aut(Y)$& the automorphism group of an object $Y$.\\
$\Der(A)$& the Lie algebra of derivations of an algebra $A$.\\
$G_x$& the stabilizer of $x$ in $G$.\\
$\gr \A$& the associated graded vector space of a filtered
vector space $\A$.\\
$H^i_{DR}(X)$& the $i$-th De Rham cohomology of a smooth algebraic variety $X$.\\
$\Id(\A)$& the set of all (two-sided) ideals of an algebra $\A$.\\
$\M_{\g-fin}$& the  finite part of a
$\g$-module $\M$.\\
$N_G(H)$& the normalizer of a subgroup $H$ in a group $G$.\\
$\Str_X$& the structure sheaf of an algebraic variety $X$.
\\$R_\hbar(\A)$&$:=\bigoplus_{i\in
\mathbb{Z}}\hbar^i \F_i\A$ :the Rees vector space of a filtered
vector space $\A$.
\\$U(\g)$& the universal enveloping algebra of a Lie algebra $\g$.
\\$\VA(\M)$& the associated variety of a $U(\g)$-module $\M$.\\
$Z_G(H)$& the centralizer of a subgroup $H$ in a group $G$.\\
$\Gamma(X,\mathcal{F})$& the space of global sections of a sheaf
$\mathcal{F}$ on $X$.
\end{longtable}

\section{Deformation quantization}\label{SECTION_Def_quant}
The material of this section is basically standard.

In Subsection \ref{SUBSECTION_quant_generalities} we recall the notion of
a deformation quantization of a symplectic algebraic variety. Also we will define
 {\it $G$-equivariant} and of a {\it graded} quantizations
and also   quantum comoment maps.  We finish the subsection recalling some
facts about the Fedosov quantization of affine varieties.

In Subsection \ref{SUBSECTION_qjb} we will recall the notion of jet bundle of a quantization that we
learned from \cite{BK}. Then axiomatizing its properties we introduce
the notion of a quantum jet bundle, which plays a crucial in the proof of Theorem \ref{Thm:main1}.

Finally, in Subsection \ref{SUBSECTION_qcm} we will prove a result on the existence
and the uniqueness of a quantum comoment map which are used in the proof of Theorem \ref{Thm:main1}.

\subsection{Generalities}\label{SUBSECTION_quant_generalities}
Let $X$ be a smooth algebraic variety equipped with a symplectic form $\omega$.
Let $\{\cdot,\cdot\}_\omega$ denote the Poisson bracket on the structure sheaf $\Str_X$ induced by
$\omega$.

 Suppose we are given
a  sheaf $\Dcal$ of $\K[[\hbar]]$-algebras on $X$ together with an isomorphism $\Dcal/\hbar \Dcal\rightarrow \Str_X$.
We suppose that $\Dcal$ is flat over $\K[[\hbar]]$ and is complete
 and separated in the $\hbar$-adic topology and such that
an isomorphism $\theta: \Dcal/ \hbar\Dcal\rightarrow \Str_{X}$ is fixed. Further, we assume that $[\widetilde{a},\widetilde{b}]$
is divisible by $\hbar^2$ for any local sections $\widetilde{a},\widetilde{b}$ of $\Dcal$.
Then the identification $\Str_X\cong \Dcal/\hbar \Dcal$ gives rise to a
Poisson bracket on $\Dcal$. Namely, fix $x\in X$ and pick two local sections $a,b$ of $\Str_X$ on a neighborhood
$U$ of $x$. Shrinking $U$ if necessary, we may assume that $a,b$ can be lifted to
sections $\widetilde{a},\widetilde{b}$ of $\Dcal$. Then we set $\{a,b\}_{\Dcal}:=\frac{1}{\hbar^2}[\widetilde{a},\widetilde{b}] \mod \hbar$.
We say that $\Dcal$ (or, more precisely, the pair $(\Dcal,\theta)$) is a {\it quantization} of the symplectic variety $X$ if $\{\cdot,\cdot\}_\omega=\{\cdot,\cdot\}_{\Dcal}$.

Usually the definition of a  quantization is given in a slightly different way, one requires
$[\widetilde{a},\widetilde{b}]$ to be divisible by $\hbar$. Given the quantization in that sense
we can get a quantization in our sense by replacing $\hbar$ with $\sqrt{\hbar}$. So our
notion of quantization is (slightly) more restrictive.  We choose our convention because it makes
passing from filtered $\K$-algebras to graded $\K[\hbar]$-algebras by using the Rees construction
more convenient. Peculiarly, we need to use the same convention while working with W-algebras, see \cite{Wquant},
\cite{HC}.

We assume that $X$ comes equipped with two group actions. First, let an algebraic group
$G$ act on $X$ by symplectomorphisms. Second, suppose that the one-dimensional torus $\K^\times$
acts on $X$ commuting with the $G$-action such that $t.\omega=t^2\omega$ for all $t\in \K^\times$.

In this paper we are going to consider graded Hamiltonian $G$-equivariant quantizations.
Let us explain what we mean by this.

We say that a quantization $\Dcal$ of $X$ is {\it $G$-equivariant} if the action of $G$ on $\Str_{X}$
lifts to a $G$-action on $\Dcal$ by algebra automorphisms such that $\hbar$ is $G$-invariant.
In particular, the $G$-action on $\Dcal$ gives rise to a Lie algebra
homomorphism $\g\rightarrow \Der(\Dcal)$, the image of $\xi\in\g$ under this homomorphism by $\xi_{\Dcal}$.

Now let us explain what we mean by a {\it Hamiltonian} quantization.
Recall that a morphism $\mu:X\rightarrow \g^*$ is called a
{\it moment map} if $\mu$ is $G$-equivariant and has the following property: for $\xi\in \g$ and  a local
section $f$ of $\Str_X$ the equality $\{\mu^*(\xi), \cdot\}=\xi_{X}$ holds (here $\xi_X$ is the derivation
of $\Str_X$ induced by the $G$-action).

Suppose that $X$ is equipped with a moment map $\mu:X\rightarrow \g^*$.
A $G$-equivariant quantization $\Dcal$ of $X$ is called
{\it Hamiltonian} if it is equipped with a $G$-equivariant homomorphism $\varphi_\hbar:\g\rightarrow \Gamma(X,\Dcal)$
(a quantum comoment map) satisfying $\frac{1}{\hbar^2}[\varphi(\xi),\cdot]=\xi_{\Dcal}$
for all $\xi\in\g$ and coinciding with $\mu^*$ modulo $\hbar$.

In the next section we will see that under some conditions on $X$ and on $G$ any $G$-equivariant
quantization $\Dcal$ of $X$ a quantum comoment map exists and is unique.

Now let us define {\it graded} Hamiltonian $G$-equivariant quantizations.

We say that a $G$-equivariant Hamiltonian quantization $\Dcal$ of $X$  is
graded if $\Dcal$ is equipped with a $\K^\times$-action by algebra automorphisms  satisfying
the following conditions.
\begin{enumerate}
\item $t.\hbar=t\hbar$.
\item The $\K^\times$-action  commutes with the $G$-action.
\item $t.\varphi_\hbar(\xi)=t^2 \varphi_\hbar(\xi)$ for all $t\in \K^\times,\xi\in\g$.
\item The isomorphism $\theta:\Dcal/\hbar\Dcal\xrightarrow{\sim} \Str_{X}$ is $\K^\times$-equivariant.
\end{enumerate}

Two graded Hamiltonian $G$-equivariant quantizations $(\Dcal_1,\theta_1),(\Dcal_2,\theta_2)$
 with quantum comoment maps $\varphi^1_\hbar,\varphi^2_\hbar$ of $X$ are said to be isomorphic if there is a $G\times \K^\times$-equivariant
isomorphism $\psi:\Dcal_1\rightarrow \Dcal_2$ of sheaves of $\K[[\hbar]]$-algebras such that
$\theta_1=\theta_2\circ\psi, \varphi^1_\hbar=\varphi^2_\hbar\circ \psi$.
All graded Hamiltonian $G$-equivariant  quantizations of $X$ form a category (in fact, a groupoid).

Until a further notice $G$ denotes a simply connected semisimple algebraic group.
We identify $\g$ with $\g^*$ using the Killing form.
Let us provide a class of examples of algebraic varieties equipped with the structures described above.
A variety $X$ we are going to consider will be either $\Orb$ or, more generally, some
$G$-equivariant covering of $\Orb$.

Let  $\eta:X\twoheadrightarrow \Orb$
be the projection. Then $\omega:=\eta^*\omega_{KK}$, where $\omega_{KK}$ stands for the Kostant-Kirillov
form on $\Orb$, is a symplectic form on $X$. A natural $G$-action on $X$ preserves $\omega$.
The composition of $\eta$ with the inclusion $\Orb\hookrightarrow \g^*$ is a moment map.

Next, let us introduce a $\K^\times$-action on $X$ by $G$-equivariant automorphisms.
In other words, we need a homomorphism from $\K^\times$ to the group $\Aut^G(X)$ of $G$-equivariant
automorphisms of $X$. The latter is naturally identified with
$N_G(H)/H$, where $H$ is the stabilizer of some point $x\in X$.

Set $e:=\eta(x)$. This is a nilpotent element in $\g$. Fix an $\sl_2$-triple $(e,h,f)$.
Let $\gamma:\K^\times\rightarrow G$ denote the composition
of the homomorphism $\SL_2\rightarrow G$ induced by the $\sl_2$-triple and of the embedding
$\K^\times\hookrightarrow \SL_2, t\mapsto \begin{pmatrix} t&0\\0&t^{-1}
\end{pmatrix}$. In particular, $\gamma(t).\xi= t^i\xi$ provided $[h,\xi]=i\xi$ for $\xi\in\g, i\in \Z$.
We remark that the image of $\gamma$ lies in $N_G(Z_G(e))$. Moreover, $\gamma(t)$ normalizes $H$.
Indeed, $\gamma(t)$ normalizes $Z_G(e)$ and hence $Z_G(e)^\circ$. This gives rise to a homomorphism
$\K^\times\rightarrow \Aut(C(e))$. But $\K^\times$ is connected and so the last homomorphism
is trivial. It follows that $\gamma(t)$ normalizes every subgroup between $Z_G(e)^\circ$ and
$Z_G(e)$, in particular, $H$.

So we can consider the composition $\underline{\gamma}: \K^\times\rightarrow N_G(H)\twoheadrightarrow
N_G(H)/H$. Define the action of $\K^\times$ on $X$ by means of $\underline{\gamma}^{-1}$. The
moment map $\mu:X\rightarrow\g^*$  becomes $\K^\times$-equivariant, where the action of $\K^\times$
on  $\g^*$ is given by $(t,\alpha)\mapsto t^{-2}\alpha$.  We remark that the most obvious $\K^\times$-action
on $\g^*$, $(t,\alpha)\mapsto t^{-1}\alpha$, in general, cannot be lifted to $X$: one can take
$\g=\sl_2$ and the universal covering of the principal orbit to get a counter-example.

We finish the subsection recalling the Fedosov construction of quantizations. We are interested
in the situation when the symplectic variety $X$ in interest is affine. In this case a quantization
$\Dcal$ of $X$ is isomorphic to $\Str_X[[\hbar]]$ as a sheaf of $\K[[\hbar]]$-modules. Moreover,
the algebra structure on $\Dcal$ is uniquely recovered from the algebra structure on the space
$\Gamma(X,\Dcal)=\K[X][[\hbar]]$ of global sections of $\Dcal$. These claimed are proved analogously
to Remarks 1.6,1.7 in \cite{BK}. We note that their setting is a conventional one: they have $\hbar$
while we have $\hbar^2$. Still, the arguments extend easily to our setting. This remark also hold for the
results recalled below.

The product on $\K[X][[\hbar]]=\Gamma(X,\Dcal)$ is usually called a {\it star-product}. Fedosov,
\cite{Fedosov1},\cite{Fedosov2}, constructed a star-product in the $C^\infty$-setting starting from
a symplectic connection on the tangent bundle and a closed 2-form $\Omega=\sum_{i=0}^\infty \omega_i \hbar^i$
with $\omega_0=\omega$ (the {\it curvature form} of a star-product). The same construction works in
the algebraic setting provided a symplectic connection exists that is always the case for affine
varieties, see, for example, \cite{Wquant}. Also if a reductive group $G$ acts on $X$ by symplectomorphisms,
then one can choose a $G$-invariant symplectic connection. The Fedosov construction
implies that the star-product produced from a $G$-invariant symplectic connection and
a $G$-invariant curvature form is $G$-equivariant.

It turns out that any star-product on $X$ is equivalent to a Fedosov one (meaning that the corresponding
quantizations are equivalent), and two Fedosov star-products
constructed from curvature forms $\Omega_1,\Omega_2$ are equivalent if and only if $\Omega_1-\Omega_2$ is exact.
The first claim follows, for example, from results of Kaledin and Bezrukavnikov, \cite{BK}, the second
one was proved by Fedosov in \cite{Fedosov2}. In the $G$-equivariant setting (when a star-product
is $G$-equivariant/ the curvature forms are $G$-invariant) and equivalence can also be made
$G$-equivariant.

In Subsection \ref{SUBSECTION_qcm} we will need an existence criterium for a quauntum comoment map.
Such a criterium was obtained by Gutt and Rawnsley in \cite{GR2}, Theorem 6.2. Again, their proof  transfers
to the algebraic setting (and to our definition of a star-product) directly.

\begin{Prop}\label{Prop_affine_qcm}
Let $G$ be a reductive group acting on $X$ by symplectomorphisms. Construct a star-product
on $X$ starting from a $G$-invariant symplectic connection and a $G$-invariant curvature form
$\Omega$. Then the quantum comoment  map for the $G$-action exists if and only if
$\iota_{\xi_X}\Omega$ is exact for all $\xi\in\g$ ($\iota_\bullet$ stands for the contraction).
Moreover, if the forms are exact, then a $G$-equivariant linear map $\varphi_\hbar:\g\rightarrow \K[X][[\hbar]]$ is a quantum comoment map if and only if $d\varphi_\hbar(\xi)=\iota_{\xi_X}\Omega$.
\end{Prop}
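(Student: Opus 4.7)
The plan is to work explicitly in the Fedosov construction and extract the equation for the quantum comoment map. Fedosov's procedure realises the star-product as the algebra $\mathcal{W}^D$ of flat sections of the Weyl algebra bundle $\mathcal{W}$ on $X$ (with fibre $\K[[y_1,\dots,y_{2n}]][[\hbar]]$ and its Moyal--Weyl product), where $D=-\delta+\nabla+\frac{1}{\hbar^2}[r,\cdot\,]$ is the Fedosov connection built from $\nabla$ and an iteratively constructed $\mathcal{W}$-valued $1$-form $r$. The projection modulo the fibre coordinates $y_i$ provides an isomorphism $\mathcal{W}^D\xrightarrow{\sim}\K[X][[\hbar]]$, and the central component of $D^2$ encodes the curvature form $\Omega$. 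Because $\nabla$ and $\Omega$ are $G$-invariant, $r$ is (uniquely, hence) $G$-invariant too, so $G$ acts on $(\mathcal{W},D)$ by automorphisms, and every $\xi\in\g$ induces a derivation $L_\xi$ of $(\mathcal{W},D)$ which on $\mathcal{W}^D\cong \K[X][[\hbar]]$ specialises to $\xi_X$.

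A quantum comoment map is exactly a $G$-equivariant linear map $\varphi_\hbar:\g\to\mathcal{W}^D$ with $L_\xi=\frac{1}{\hbar^2}\ad(\varphi_\hbar(\xi))$. Next I would determine, given a flat section $\phi\in\mathcal{W}^D$, which derivation of $\K[X][[\hbar]]$ is induced by $\frac{1}{\hbar^2}\ad(\phi)$. Expanding $\phi$ in the fibre variables and using the flatness condition $D\phi=0$ to express its higher fibre components in terms of the Fedosov data (so that the linear-in-$y$ part is essentially $d\phi$ plus $\Omega$-corrections), one checks that $\frac{1}{\hbar^2}\ad_\star(\phi)$ coincides with the vector field $V_\phi$ characterised by $\iota_{V_\phi}\Omega=d\phi$. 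Consequently the equation $L_\xi=\frac{1}{\hbar^2}\ad_\star(\varphi_\hbar(\xi))$ is equivalent to the identity $d\varphi_\hbar(\xi)=\iota_{\xi_X}\Omega$ of formal $1$-forms on $X$.

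From this the proposition follows directly. The $1$-form $\iota_{\xi_X}\Omega\in\Omega^1(X)[[\hbar]]$ is automatically closed: $\Omega$ is closed and $G$-invariance gives $L_{\xi_X}\Omega=0$, so $d\iota_{\xi_X}\Omega=L_{\xi_X}\Omega-\iota_{\xi_X}d\Omega=0$ by Cartan's formula. Solvability of $d\varphi_\hbar(\xi)=\iota_{\xi_X}\Omega$ in $\K[X][[\hbar]]$ is thus equivalent to exactness, term by term in $\hbar$; conversely, any solution yields a quantum comoment map. Linearity of $\xi\mapsto\varphi_\hbar(\xi)$ is free, and $G$-equivariance can be arranged using reductivity of $G$ (the obstruction lives in the space of locally constant functions and is killed by averaging).

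The main obstacle is the identification $\frac{1}{\hbar^2}\ad_\star(\phi)=V_\phi$ carried out in the second step: this is the core algebraic content of the Gutt--Rawnsley calculation and requires careful bookkeeping of the Fedosov data, together with a verification that the $\hbar^2$-convention used in this paper (commutators divisible by $\hbar^2$ rather than $\hbar$) does not disturb the resulting formula beyond an innocuous rescaling. Once this identity is in place the rest of the argument is formal.
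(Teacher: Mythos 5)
The paper gives no proof of this proposition: it cites Gutt--Rawnsley (\cite{GR2}, Theorem 6.2) and simply asserts that their argument transfers to the algebraic setting and to the $\hbar^2$-convention. Your plan is a faithful reconstruction of exactly that Fedosov--Weyl-bundle argument, with the computational core correctly isolated as the equivalence $d\varphi_\hbar(\xi)=\iota_{\xi_X}\Omega \Leftrightarrow \xi_{\Dcal}=\frac{1}{\hbar^2}\ad_\star(\varphi_\hbar(\xi))$ (stated a little loosely, since $\frac{1}{\hbar^2}\ad_\star(\phi)$ is a derivation of the star-product lifting a vector field rather than a vector field itself), so it matches the proof the paper relies on.
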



\subsection{Quantum jet bundles}\label{SUBSECTION_qjb}
In the proof of Theorem \ref{Thm:main1} an important role is played by the jet bundles of quantizations,
compare with \cite{BK}. The material of this subsection should be pretty standard.

We start by recalling the jet bundle $\Jet^\infty \Str_X$ of a smooth variety $X$.
Let $\pi_1,\pi_2$ denote the projections $X\times X\rightarrow X$ to the first and
to the second factor.
Consider the completion $\Str^\wedge_{X\times X}$ of $\Str_{X\times X}=\pi_2^*(\Str_X)$
with respect to the ideal $I_\Delta$ of the diagonal $X\hookrightarrow X\times X$, i.e.,
$\Str^\wedge_{X\times X}:=\varprojlim_{k\rightarrow \infty} \Str_{X\times X}/I_\Delta^k$.
By definition, $\Jet^\infty\Str_X:=\pi_{1*}(\Str^\wedge_{X\times X})$. This is a pro-coherent
sheaf of $\Str_X$-algebras. The jet bundle  $\Jet^\infty \Str_X$ comes equipped with a flat connection $\nabla$ defined as follows. Pick a vector field $\xi$ on $X$. Define the connection $\nabla$ on $\pi_{1*}(\Str_{X\times X})$
by $\nabla_\xi(f\otimes g)=(\xi.f)\otimes g$. This connection can be uniquely extended
to a continuous (with respect to the $I_\Delta$-adic topology) connection on $\Jet^\infty \Str_X$,
which is a connection we need. The sheaf of flat sections of $\Jet^\infty \Str_X$ is naturally
identified with $\Str_X$ (or, more precisely, with $\pi_{1*}(\pi_2^{-1}(\Str_X))$, where
$\pi_2^{-1}$ denotes the sheaf-theoretic pull-back).
Any fiber of $\Jet^\infty \Str_X$ is (non-canonically) identified with the algebra of formal
power series in $\dim X$ variables.

Suppose now that $X$ is a symplectic variety. Then $\Jet^\infty \Str_X$ comes equipped with a
$\Str_X$-linear Poisson bracket (extended by continuouty from the natural $\pi_1^{-1}(\Str_X)$-linear
Poisson bracket on $\Str_{X\times X}$). The induced bracket on $\Str_X$ (considered as the space
of flat sections in $\Jet^\infty \Str_X$) coincides with the initial one.
Any fiber of $\Jet^\infty \Str_X$ is isomorphic (as a Poisson algebra)
to the algebra $A:=\K[[x_1,\ldots,x_n,y_1,\ldots,y_n]],n:=\frac{1}{2}\dim X$, where a Poisson bracket
is defined by $\{x_i,x_j\}=\{y_i,y_j\}=0, \{x_i,y_j\}:=\delta_{ij}$.

Now let us define the notion of a {\it quantum jet bundle}.
By definition, a quantum jet bundle on $X$ is a triple $(\Dsh,\widetilde{\nabla},\Theta)$ consisting of
\begin{itemize}
\item a pro-coherent sheaf $\Dsh$ of $\Str_X[[\hbar]]$-algebras such that $[\Dsh,\Dsh]\subset \hbar^2\Dsh$,
\item a flat $\K[[\hbar]]$-linear connection $\widetilde{\nabla}$ on $\Dsh$ that is compatible
with the algebra structure in the sense that $\widetilde{\nabla}_\xi$ is a derivation of
$\Dsh$ for any vector field $\xi$ on $X$,
\item and an isomorphism $\Theta:\Dsh/\hbar\Dsh\rightarrow \Jet^\infty\Str_X$ of flat sheaves of Poisson
algebras ("flat" means the $\Theta$ intertwines the connections; the bracket on $\Dsh/\hbar\Dsh$ is defined
as on $\Dcal/\hbar \Dcal$, see the previous subsection).
\end{itemize}
By a pro-coherent sheaf we mean an inverse limit of coherent sheaves.
Any fiber of $\Dsh$ is a quantization of the Poisson algebra $A$. One can show that any quantization
of $A$ is isomorphic to the formal Weyl algebra, i.e., the space $A[[\hbar]]$
equipped with a Moyal-Weyl product $f*g=\mu(\exp{\frac{P\hbar^2}{2}}(f\otimes g))$, but we are
not going to use this fact. Here $\mu:A\otimes A\rightarrow A$
is the multiplication map and $P$ is the Poisson bivector on $A$.

An isomorphism of two quantum jet bundles $(\Dsh_1,\widetilde{\nabla}_1,\Theta_1), (\Dsh_2,\widetilde{\nabla}_2,\Theta_2)$ is an isomorphism $\Psi:\Dsh_1\rightarrow \Dsh_2$ of $\Str_X[[\hbar]]$-algebras intertwining the connections, and such that $\Theta_1=\Theta_2\circ \varphi$.

Now let $\Dcal$ be a quantization of $X$. Then one can define the jet bundle $\Jet^\infty\Dcal$
of $\Dcal$ (see \cite{BK}, Definition 1.4) similarly to $\Jet^\infty\Str_X$. Namely consider the bundle
$\Str_X\otimes \Dcal$ on $X\times X$. There is a natural projection (the quotient by $\hbar$) $\Str_X\otimes \Dcal\twoheadrightarrow
\Str_X\otimes \Str_X$. Let $\widetilde{I}_\Delta$ denote the inverse image of
$I_\Delta$ under this projection. Set $(\Str_X\otimes \Dcal)^\wedge:=\varprojlim_{k\rightarrow \infty}\Str_X\otimes \Dcal/\widetilde{I}_\Delta^k$ and $\Jet^\infty \Dcal:=\pi_{1*}((\Str_X\otimes\Dcal)^\wedge)$.
The sheaf $\Jet^\infty \Dcal$ is equipped with a flat connection
$\widetilde{\nabla}$ defined completely analogously to the connection $\nabla$ above. There is a natural
isomorphism $\Theta:\Jet^\infty \Dcal/\hbar \Jet^\infty \Dcal\rightarrow \Jet^\infty \Str_X$.
It is easy to see that $(\Jet^\infty \Dcal,\widetilde{\nabla},\Theta)$ is a quantum jet bundle in the sense of the above definition.
The sheaf of flat sections of $\J^\infty \Dcal$ is  $\pi_{1*}(\pi_2^{-1}(\Dcal))$.

Conversely, let $(\Dsh, \widetilde{\nabla},\Theta)$ be a quantum jet bundle. Consider the sheaf $\Dcal:=\Dsh^{\widetilde{\nabla}}$ of flat sections of $\Dsh$. The isomorphism
$\Theta:\Dsh/\hbar\Dsh\rightarrow \Jet^\infty \Str_X$ restricts  to an embedding
$\theta:\Dcal/\hbar \Dcal\rightarrow (\Jet^\infty \Str_X)^\nabla=\Str_X$. It is not difficult
to show that $\theta$ is also surjective. So $(\Dcal,\theta)$ is a quantization of $X$.

This discussion can be summarized in the following proposition.

\begin{Prop}\label{Prop:jet1}
The assignments
\begin{itemize}
\item $\Dcal\mapsto \Jet^\infty \Dcal$
\item $\Dsh\mapsto \Dsh^{\widetilde{\nabla}}$
\end{itemize}
define mutually quasi-inverse equivalences between the category of quantizations
and the category of quantum jet bundles on $X$.
\end{Prop}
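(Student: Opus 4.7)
The strategy is to verify that the two assignments are quasi-inverse on objects; naturality in morphisms then follows automatically from the constructions. The composition $\Dcal \mapsto \Jet^\infty \Dcal \mapsto (\Jet^\infty \Dcal)^{\widetilde{\nabla}}$ is essentially handled by the discussion preceding the proposition: the sheaf of flat sections of $\Jet^\infty\Dcal$ equals $\pi_{1*}(\pi_2^{-1}(\Dcal))$, which by restriction to the diagonal is canonically isomorphic to $\Dcal$. A direct computation from the definitions shows that the induced $\theta$ and Poisson bracket match the original data.

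The more substantive direction is the other composition $\Dsh \mapsto \Dsh^{\widetilde{\nabla}} \mapsto \Jet^\infty(\Dsh^{\widetilde{\nabla}})$. Set $\Dcal := \Dsh^{\widetilde{\nabla}}$. Using the inclusion $\Dcal\subset\Dsh$ and the $\Str_X[[\hbar]]$-module structure on $\Dsh$, one has a multiplication morphism
\[
m:\Str_X\otimes_\K \Dcal\to\Dsh,\qquad f\otimes s\mapsto fs.
\]
Pulling back to $X\times X$, this gives $\pi_1^*\Str_X\otimes \pi_2^*\Dcal\to \Delta_*\Dsh$. The key observation is that this morphism is continuous in the $\widetilde{I}_\Delta$-adic topology on the source: the kernel of $\pi_1^*\Str_X\otimes\pi_2^*\Str_X\to \Delta_*\Str_X$ is $I_\Delta$, and $\hbar$ is sent to $\hbar$. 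Passing to completions and then applying $\pi_{1*}$ produces the desired morphism $\Phi:\Jet^\infty\Dcal\to\Dsh$, which is $\Str_X[[\hbar]]$-linear, intertwines the two connections (flat sections go to flat sections by construction), and whose reduction $\Phi\bmod\hbar$ is, via $\Theta$, the tautological endomorphism of $\Jet^\infty\Str_X$.

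To see that $\Phi$ is an isomorphism, I would run a Nakayama/$\hbar$-adic bootstrap. Both source and target are $\hbar$-adically complete, separated, and $\K[[\hbar]]$-flat, so it suffices to verify that $\Phi\bmod\hbar$ is an isomorphism of pro-coherent sheaves on $X$. That reduced statement is the classical identity $\Jet^\infty((\Jet^\infty\Str_X)^{\nabla}) \cong \Jet^\infty\Str_X$, which is immediate since $(\Jet^\infty\Str_X)^{\nabla}=\Str_X$. The main obstacle I foresee lies in justifying one input to this argument, namely that $\Dcal=\Dsh^{\widetilde{\nabla}}$ is genuinely a quantization: one must check that $\Dcal$ is $\K[[\hbar]]$-flat with $\Dcal/\hbar\Dcal = \Str_X$. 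This is an order-by-order lifting problem: given a flat section modulo $\hbar^n$, the obstruction to extending it modulo $\hbar^{n+1}$ lives in the first formal de Rham cohomology of $(\Dsh,\widetilde{\nabla})$ restricted to a formal polydisc, which vanishes by the formal Poincar\'e lemma; hence $\Dcal$ surjects onto $\Str_X$ modulo $\hbar$ and is $\hbar$-torsion-free, as required. All other steps are routine sheaf-theoretic verifications, and functoriality is transparent.
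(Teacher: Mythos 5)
Your proposal is correct and follows the same route the paper sketches: the paper simply presents the two constructions ($\Jet^\infty\Dcal$ and $\Dsh^{\widetilde\nabla}$) in the preceding discussion and asserts the proposition as a summary, leaving the surjectivity of $\theta$ and the comparison $\Jet^\infty(\Dsh^{\widetilde\nabla})\cong\Dsh$ as remarks that are "not difficult to show." You supply exactly the missing details — the formal Poincar\'e lemma for the jet-bundle connection (more precisely, the vanishing of the higher cohomology sheaves of $(\Omega^\bullet_X\otimes\Jet^\infty\Str_X,\nabla)$, not of $\Dsh$ itself as you wrote) to lift flat sections $\hbar$-adically, and the $\Str_X[[\hbar]]$-linear morphism $\Jet^\infty\Dcal\to\Dsh$ together with a Nakayama argument — and these close the sketch correctly, using the $\K[[\hbar]]$-flatness and $\hbar$-adic completeness that the paper's definitions implicitly assume.
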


We need a ramification of this proposition covering graded Hamiltonian $G$-equivariant quantizations.
So we need the notion of a graded Hamiltonian $G$-equivariant quantum jet bundle.

Let $G$ be an algebraic group.
Suppose $G,\K^\times$ act on $X$ as in the previous subsection.
Clearly, a $G$-action induces an action of $G$ on $\Jet^\infty \Str_X$ (induced by
the diagonal $G$-action on $X\times X$). This action preserves the
connection and the algebra structure. We say that a quantum jet bundle $(\Dsh,\widetilde{\nabla},\Theta)$ is {\it $G$-equivariant} if $\Dsh$ is equipped with a $G$-action such that $\hbar$ and $\widetilde{\nabla}$ are $G$-invariant and $\Theta:\Dsh/\hbar \Dsh\rightarrow \Jet^\infty \Str_X$ is  $G$-equivariant.
The $G$-action gives rise to the homomorphism $\xi\mapsto \xi_{\Dsh},\g\rightarrow \Der(\Dsh),$ of Lie algebras.

For example, let $\Dcal$ be a $G$-equivariant quantization of $\Str_X$. The diagonal
$G$-action  on $\Str_X\otimes \Dcal$ extends to $(\Str_X\otimes \Dcal)^\wedge$ and so gives rise to a
$G$-action on $\Jet^\infty \Dcal$ making $\Jet^\infty \Dcal$ a $G$-equivariant quantum jet bundle.

Now let us introduce the notion of a {\it Hamiltonian} $G$-equivariant quantum jet bundle.
Suppose that there is a moment map $\mu:X\rightarrow \g^*$ for the $G$-action. For $\xi\in\g$ set $\Phi(\xi)=\pi_2^*(\mu^*(\xi))\in \Gamma(X,\Jet^\infty\Str_X)^\nabla$. We have the maps
$\xi\mapsto \xi_{X\times X},\xi^1_{X\times X},\xi^2_{X\times X}:\g\rightarrow \Der(\Str_{X\times X})$
associated with the diagonal $G$-action and the actions of $G$ on the first and on the second copy of
$X$, respectively. Clearly, $\xi_{X\times X}=\xi^1_{X\times X}+\xi^2_{X\times X}$. Therefore for the induced derivation
$\xi_{\Jet^\infty \Str_X}$ of the jet bundle we have $\xi_{\Jet^\infty \Str_X}=\nabla_{\xi_X}+ \{\mu^*(\xi),\cdot\}$.
We say that a $G$-equivariant quantum jet bundle $(\Dsh,\widetilde{\nabla}, \Theta)$ is Hamiltonian
if it is equipped with a map $\Phi_\hbar:\g\rightarrow \Gamma(X,\Dsh)^{\widetilde{\nabla}}$ such that
\begin{equation}\label{eq:Dsh_connection}
\xi_{\Dsh}=\widetilde{\nabla}_{\xi_X}+\frac{1}{\hbar^2}[\Phi_\hbar(\xi),\cdot].
\end{equation}
and $\Theta(\Phi_\hbar(\xi))=\Phi(\xi)$.
Now if $(\Dcal,\theta)$ is a Hamiltonian $G$-equivariant quantization with a quantum comoment
map $\varphi_\hbar$, then $\Jet^\infty \Dcal$ is Hamiltonian with $\Phi_\hbar(\xi)=\varphi_\hbar(\xi)\in \Gamma(X, \Jet^\infty \Dcal)^{\widetilde{\nabla}}$.

The quantum comoment map $\Phi_\hbar$ extends to a certain sheaf homomorphism
which will be important in the sequel. Namely, define the {\it graded}
universal enveloping algebra $\U_\hbar$ of $\g$ as the quotient of $T(\g)[\hbar]$ by the relations
$\xi\otimes \eta-\eta\otimes \xi-\hbar^2[\xi,\eta], \xi,\eta\in\g$. Then $\Phi_\hbar$ extends to an
algebra homomorphism $\U_\hbar\rightarrow \Gamma(X, \Dsh^{\widetilde{\nabla}})$. Extend $\Phi_\hbar$ to a homomorphism $\Str_X\otimes \U_\hbar\rightarrow \Dsh$ by $\Str_X$-linearity.
Further,
 we have a homomorphism $$\Str_X\otimes \U_\hbar/\hbar \Str_X\otimes \U_\hbar\rightarrow
\Str_X\otimes \K[X]$$ given by $f\otimes x\mapsto f\otimes \mu^*(x)$, where $f$ is a local section of
$\Str_X$ and $x\in S(\g)=\U_\hbar/\hbar \U_\hbar$. Let $I_{\mu,\Delta}$ denote the inverse
image of $I_\Delta$ in $\Str_X\otimes \U_\hbar/\hbar \Str_X\otimes\U_\hbar$ and $\widetilde{I}_{\mu,\Delta}$ be the inverse
image of $I_{\mu,\Delta}$ in $\Str_X\otimes \U_\hbar$. Consider the completion $$\Jet^\infty \U_\hbar:=\varprojlim_{k\rightarrow \infty}\Str_X\otimes \U_\hbar/(\Str_X\otimes \U_\hbar)\widetilde{I}_{\mu,\Delta}^k$$ The homomorphism $\Phi_\hbar:\Str_X\otimes \U_\hbar\rightarrow \Dsh$
is continuous in the $\widetilde{I}_{\mu,\Delta}$-adic topology and so extends to a continuous homomorphism
$\Jet^\infty \U_\hbar\rightarrow \Dsh$ (also denoted by $\Phi_\hbar$) in a unique way. We remark that $\Jet^\infty \U_\hbar$ comes equipped with a natural
connection and the homomorphism $\Phi_\hbar$ intertwines the connections.

Finally, let us define {\it graded} Hamiltonian $G$-equivariant quantum jet bundles.
We have a natural $\K^\times$-action on $\Jet^\infty \Str_X$. We say that a Hamiltonian
$G$-equivariant quantum jet bundle $(\Dsh,\widetilde{\nabla},\Theta)$ is graded,
if $\Dsh$ is equipped with a $\K^\times$-action by algebra automorphisms such that
\begin{itemize}
\item the action commutes with $G$.
\item $t.\hbar=t\hbar, t.\Phi_\hbar(\xi)=t^2\Phi_\hbar(\xi)$.
\item $\Theta$ is $\K^\times$-equivariant and $\widetilde{\nabla}$ is $\K^\times$-invariant.
\end{itemize}
If $\Dcal$ is graded, then $\Jet^\infty \Dcal$ has a natural $\K^\times$-action and is graded
with respect to this action.

The previous discussion implies the following corollary of Proposition \ref{Prop:jet1}.

\begin{Cor}\label{Cor:equiv}
The equivalences of Proposition \ref{Prop:jet1} define mutually inverse equivalences between
\begin{itemize}
\item the category of graded Hamiltonian $G$-equivariant quantizations and
\item the category of graded Hamiltonian $G$-equivariant quantum jet bundles.
\end{itemize}
\end{Cor}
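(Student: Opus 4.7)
The plan is to check that the equivalences of Proposition~\ref{Prop:jet1} lift to the enriched categories, i.e.\ that the passages $\Dcal\mapsto\Jet^\infty\Dcal$ and $\Dsh\mapsto\Dsh^{\widetilde{\nabla}}$ preserve the $G$-equivariance, Hamiltonian, and graded structures; once this is done, the quasi-inverse property and naturality follow by applying the same checks to the isomorphisms already supplied by Proposition~\ref{Prop:jet1}. In the forward direction, given a graded Hamiltonian $G$-equivariant quantization $(\Dcal,\theta,\varphi_\hbar)$, the discussion preceding the statement already explains how to equip $\Jet^\infty\Dcal$ with the corresponding data: the diagonal $G$-action and the natural $\K^\times$-action on $X\times X$ descend to $\Jet^\infty\Dcal$, the isomorphism $\Theta$ is tautologically $G\times\K^\times$-equivariant, and $\Phi_\hbar(\xi):=\varphi_\hbar(\xi)\in\Gamma(X,\Jet^\infty\Dcal)^{\widetilde{\nabla}}$ satisfies $\Theta(\Phi_\hbar(\xi))=\Phi(\xi)$ together with relation \eqref{eq:Dsh_connection}.

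For the reverse direction I would fix a graded Hamiltonian $G$-equivariant quantum jet bundle $(\Dsh,\widetilde{\nabla},\Theta,\Phi_\hbar)$ and produce the structure on $\Dcal:=\Dsh^{\widetilde{\nabla}}$. Since the $G$-action on $\Dsh$ commutes with $\widetilde{\nabla}$ and fixes $\hbar$, it restricts to a $G$-action on $\Dcal$ by $\K[[\hbar]]$-algebra automorphisms, and $\theta$ is $G$-equivariant because $\Theta$ is and because $\Str_X$ is identified $G$-equivariantly with the flat sections of $\Jet^\infty\Str_X$. The $\K^\times$-action restricts analogously, and axioms (1)--(4) of a graded Hamiltonian quantization are inherited directly from their counterparts for $\Dsh$. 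I would then set $\varphi_\hbar:=\Phi_\hbar$, which is well defined because $\Phi_\hbar(\xi)$ is flat by hypothesis.

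The key verification is the quantum comoment identity $\tfrac{1}{\hbar^2}[\varphi_\hbar(\xi),\cdot]=\xi_\Dcal$ on $\Dcal$. For any flat section $s\in\Dcal$, equation \eqref{eq:Dsh_connection} reduces to $\xi_\Dsh(s)=\tfrac{1}{\hbar^2}[\Phi_\hbar(\xi),s]$ because $\widetilde{\nabla}_{\xi_X}s=0$; the right-hand side is itself flat since $\widetilde{\nabla}$ is a derivation and both $\Phi_\hbar(\xi)$ and $s$ are flat, and the restriction of $\xi_\Dsh$ to $\Dcal$ is precisely $\xi_\Dcal$. Reducing modulo $\hbar$ and using $\Theta(\Phi_\hbar(\xi))=\pi_2^*\mu^*(\xi)$ yields $\varphi_\hbar(\xi)\equiv\mu^*(\xi)\pmod{\hbar}$ after passing to flat sections, so $\varphi_\hbar$ is indeed a quantum comoment map.

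It remains to handle morphisms in both directions: an isomorphism $\Psi:\Dsh_1\to\Dsh_2$ intertwining the connections, the $\Theta_i$, the $G\times\K^\times$-actions, and the quantum comoment maps automatically restricts to an isomorphism of the associated graded Hamiltonian $G$-equivariant quantizations, and conversely $\Jet^\infty\psi$ applied to an isomorphism $\psi$ of such quantizations inherits the required compatibilities by functoriality of the jet bundle construction. The only delicate step is the translation of \eqref{eq:Dsh_connection} into the pointwise comoment identity on $\Dcal$, which rests on the twin facts that $\Phi_\hbar(\xi)$ is flat and that $\widetilde{\nabla}_{\xi_X}$ annihilates flat sections; everything else is a direct unraveling of definitions.
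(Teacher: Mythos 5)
Your proof is correct and follows essentially the same route as the paper, which simply states the corollary as implied by the preceding discussion: that discussion explicitly spells out only the forward passage $\Dcal\mapsto\Jet^\infty\Dcal$, and you correctly identify the reverse direction $\Dsh\mapsto\Dsh^{\widetilde{\nabla}}$ as the piece needing verification, with the crux being that \eqref{eq:Dsh_connection} specializes on flat sections to the quantum comoment identity because $\widetilde{\nabla}_{\xi_X}$ kills flat sections and $[\Phi_\hbar(\xi),s]$ is again flat.
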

\subsection{Existence of quantum comoment map}\label{SUBSECTION_qcm}
In this subsection we show that under some conditions (satisfied for all coverings of
$\Orb$) any $G$-equivariant quantization possesses a unique quantum comoment map.

The main result is the following proposition.

\begin{Prop}\label{Prop:qcm_exist}
Let $G$ be a semisimple algebraic group and $X$ be a symplectic variety.
Let $G$ act on $X$ by symplectomorphisms. Suppose  $H^1_{DR}(X)=\{0\}$.
Let $\Dcal$ be a $G$-equivariant quantization of $X$. Then there exists a
unique quantum comoment map $\varphi_\hbar: \g\rightarrow \Gamma(X,\Dcal)$.
\end{Prop}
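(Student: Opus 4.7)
The plan is to construct $\varphi_\hbar$ by successive approximation in powers of $\hbar$, using $H^1_{DR}(X)=0$ to kill the semiclassical obstruction at each order and the semisimplicity of $\g$ (via Whitehead-type complete-reducibility) to preserve $G$-equivariance.

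\textbf{Uniqueness} is the shorter half. If $\varphi_\hbar^1,\varphi_\hbar^2$ are two quantum comoment maps, then $\psi_\hbar:=\varphi_\hbar^1-\varphi_\hbar^2$ is a $G$-equivariant linear map $\g\to \Gamma(X,\Dcal)$ with $[\psi_\hbar(\xi),b]=0$ for every local section $b$ of $\Dcal$. I would first check that the center of $\Gamma(X,\Dcal)$ is $\K[[\hbar]]$: reducing mod $\hbar$, a Poisson-central element of $\Str_X(X)$ is locally constant on the symplectic $X$, and the lift to $\Dcal$ is obtained by an $\hbar$-adic induction (at each order the obstruction is again Poisson-central, hence locally constant). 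Consequently $\psi_\hbar$ lands in the trivial $\g$-module $\K[[\hbar]]$, and since $\g=[\g,\g]$ it vanishes.

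\textbf{Existence:} Build a compatible sequence of $G$-equivariant linear maps $\varphi_\hbar^{(N)}:\g\to\Gamma(X,\Dcal)/\hbar^{N+1}\Gamma(X,\Dcal)$ such that $\tfrac{1}{\hbar^2}[\varphi_\hbar^{(N)}(\xi),\cdot]\equiv \xi_\Dcal$ to the appropriate order. For the base case one needs a classical $G$-equivariant comoment map $\varphi_0:\g\to\Str_X(X)$; existence of Hamiltonians for each $\xi_X$ follows from $d(\iota_{\xi_X}\omega)=0$ and $H^1_{DR}(X)=0$, and the freedom in choosing them is valued in the trivial $\g$-module of locally constant functions, so Whitehead lets us make $\varphi_0$ linear and $G$-equivariant. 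For the inductive step, pick any $G$-equivariant linear lift $\tilde\varphi:\g\to\Gamma(X,\Dcal)/\hbar^{N+2}\Gamma(X,\Dcal)$ of $\varphi_\hbar^{(N)}$ and consider the defect
\[
\delta(\xi):=\tfrac{1}{\hbar^2}[\tilde\varphi(\xi),\cdot]-\xi_\Dcal,
\]
which vanishes mod $\hbar^{N+1}$; its leading coefficient defines a $G$-equivariant family $\xi\mapsto\delta_\xi$ of derivations of $\Str_X$ that preserve the Poisson bracket (since both contributions do order by order), i.e. symplectic vector fields on $X$. Applying $H^1_{DR}(X)=0$ gives Hamiltonians $g_\xi\in\Str_X(X)$ with $\{g_\xi,\cdot\}=\delta_\xi$, unique up to locally constant functions; a further Whitehead averaging makes $\xi\mapsto g_\xi$ linear and $G$-equivariant, and then $\varphi_\hbar^{(N+1)}:=\tilde\varphi-\hbar^{N+1}g$ is the desired refinement. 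Passing to the inverse limit produces $\varphi_\hbar$.

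The main obstacle I expect is twofold. First, one has to verify that the leading-order defect $\delta_\xi$ really is a symplectic vector field on $X$, rather than just some derivation of $\Str_X$, so that $H^1_{DR}(X)=0$ is the relevant vanishing; this comes down to a careful bookkeeping of how the $\hbar$-expansions of $[\tilde\varphi(\xi),\cdot]$ and $\xi_\Dcal$ interact with the Poisson structure. Second, one must arrange simultaneously $G$-equivariant choices at every order; this requires $\Gamma(X,\Dcal)$ to be a locally finite rational $G$-module, which is automatic once one restricts to $G$-finite vectors, so that Whitehead's lemmas apply to each finite-dimensional isotypic component of the obstruction.
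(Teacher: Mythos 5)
Your uniqueness argument is fine and is essentially the paper's (the difference of two quantum comoment maps is a $G$-equivariant map from $\g$ to the center $\K[[\hbar]]$, hence zero since $\g=[\g,\g]$). The existence argument, however, has a genuine gap at the point where you feed the order-$(N+1)$ obstruction class back into the quantization. Your correction is a global function $g_\xi\in\Gamma(X,\Str_X)$ produced from $H^1_{DR}(X)=0$, and you set $\varphi_\hbar^{(N+1)}:=\tilde\varphi-\hbar^{N+1}g$. For this to make sense in $\Gamma(X,\Dcal)/\hbar^{N+2}\Gamma(X,\Dcal)$ you must realize $g_\xi$ as an element of $\hbar^{N+1}\Gamma(X,\Dcal)/\hbar^{N+2}\Gamma(X,\Dcal)\cong\Gamma(X,\Dcal)/\hbar\Gamma(X,\Dcal)$, but the natural map $\Gamma(X,\Dcal)/\hbar\Gamma(X,\Dcal)\rightarrow\Gamma(X,\Str_X)$ is in general only injective; its cokernel is controlled by $H^1(X,\Str_X)$ (equivalently by the $\hbar$-torsion of $H^1(X,\Dcal)$), and $X$ is not assumed affine, nor is $H^1(X,\Str_X)=0$ among the hypotheses. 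The same problem already appears at the base case, where $\mu^*(\xi)$ must be lifted to an honest global section of $\Dcal$. By contrast, the two obstacles you flagged are unproblematic: the leading defect is the leading coefficient of a derivation of the sheaf of algebras $\Dcal$ divisible by $\hbar^{N+1}$, hence automatically a Poisson derivation of $\Str_X$, i.e.\ a symplectic vector field; and the equivariant choices are handled by the Reynolds operator as you say.

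The paper's proof is arranged precisely to sidestep this lifting problem. It reduces to $\xi$ in the Lie algebra of a one-dimensional torus $T$, takes a $T$-stable affine cover $X=\bigcup_i X_i$, and on each piece writes $\Dcal|_{X_i}$ as a Fedosov quantization with $T$-invariant curvature form $\Omega_i$, so that by the Gutt--Rawnsley criterion (Proposition \ref{Prop_affine_qcm}) one must solve $df_i=\iota_{\xi_X}\Omega_i$, which is unobstructed on an affine piece. Globalization is then achieved not by lifting functions but by noting that the differences $f_i-f_j$ are central, hence constants in $\K[[\hbar]]$, and a Zariski $1$-cocycle of constants on an irreducible variety is a coboundary; the hypothesis $H^1_{DR}(X)=0$ enters through the \v{C}ech--De Rham cocycle $(\iota_\xi\Omega_i,\,\iota_\xi\alpha_{ij}+h_{ij})$. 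To rescue your order-by-order scheme you would either have to add the hypothesis $H^1(X,\Str_X)=0$, or run the induction on local sections over an affine cover and glue using centrality of the ambiguities --- which essentially reproduces the paper's argument.
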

%
%
\begin{proof}
We remark that this is a standard fact that a moment map for the action of $G$ on $X$ exists
and is unique.

Now let us check that there is a linear map $\varphi'_\hbar:\g\rightarrow \Gamma(X,\Dcal)$
such that $\frac{1}{\hbar^2}[\varphi'_\hbar(\xi),\cdot]=\xi_{\Dcal}$. Then for $\varphi_\hbar$ we can take
the $G$-invariant component of $\varphi'_\hbar$.

It is enough to show that an element $f\in \Gamma(X,\Dcal)$ with $\frac{1}{\hbar^2}[f,\cdot]=\xi_\Dcal$
exists for any element $\xi\in\g$  lying  in the Lie algebra
of a one-dimensional torus, say $T$, of $G$.

There is a $T$-stable open affine covering  $X=\bigcup_i X_i$. The restriction of
$\Dcal$ to $X_i$ is, of course, a $T$-equivariant quantization of $X_i$. Therefore the restriction
is isomorphic to the Fedosov quantization of $X_i$ with a $T$-invariant curvature form, say, $\Omega_i$.
Recall that for $f_i\in \K[X_i][[\hbar]]$
the condition $\frac{1}{\hbar^2}[f,\cdot]=\xi_{X_i}$ is equivalent to $df_i=\iota_{\xi_X}\Omega_i$, see Proposition \ref{Prop_affine_qcm}.

The restrictions of $\Omega_i,\Omega_j$ to $X_i\cap X_j$ have the same cohomology class. So one can find  $T$-invariant
1-forms $\alpha_{ij}$ on $X_i\cap X_j$ such that $d\alpha_{ij}=\Omega_i-\Omega_j$ and $\alpha_{ij}=-\alpha_{ji}$.
Set $\beta_i:=\iota_\xi\Omega_i, g_{ij}:=\iota_\xi \alpha_{ij}$. Let us check that there are  $h_{ij}\in \K$ such that $\beta_i, g_{ij}+h_{ij}$ form
a De Rham 1-cocycle.

The equalities $d\beta_i=0, dg_{ij}=\beta_i-\beta_j$ follow from the condition that
$\Omega_i,a_{ij}$ are $T$-invariant and so vanish under the Lie derivative of $\xi$.
It remains to check that $h_{ijk}:=g_{ij}+g_{jk}+g_{ki}=0$. Remark that $d h_{ijk}=0$.
So $h_{ijk}$ is a 2-cocycle on $X$ (in the Zariski topology) with coefficients in $\K$.
Such a cocycle is  a coboundary (from the irreducibility of $X$): there are constants $h_{ij}$ with
$h_{ijk}=h_{ij}+h_{jk}+h_{ki}$. These are constants we need.

Since $H^1_{DR}(X)=\{0\}$, we see that $\beta_i, g_{ij}+h_{ij}$ is a coboundary and the
existence of $f_i$ follows.

Abusing the notation we  denote the element of $\Gamma(X_i,\Dcal)$ corresponding to $f_i$
again by $f_i$. We remark that $f_i-f_j$ lies in the center of $\Gamma(X_i\cap X_j,\Dcal)$.
The latter coincides with $\K[[\hbar]]$. So  $f_i-f_j$ form a 1-cocycle with coefficients
in $\K[[\hbar]]$. So we can add elements of $\K[[\hbar]]$ to the  $f_i$'s to  glue $f_i$ into
a global section $f$ of $\Dcal$.

We have just proved the existence of a quantum comoment map. As in the classical setting the uniqueness
follows from the fact that $\g$ has no nontrivial central extensions.
\end{proof}

\begin{Cor}\label{Cor:qcm_graded}
We preserve the conventions of Proposition \ref{Prop:qcm_exist}
Suppose $\K^\times$ acts on $X$ as in Subsection \ref{SUBSECTION_quant_generalities}
and $\Dcal$ is graded.
Then the quantum comoment map $\varphi_\hbar:\g\rightarrow \Gamma(X,\Dcal)$ satisfies
$t.\varphi_\hbar(\xi)=t^2\varphi(\hbar)$.
\end{Cor}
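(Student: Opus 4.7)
The plan is to exploit the uniqueness half of Proposition \ref{Prop:qcm_exist}: I will twist $\varphi_\hbar$ by the $\K^\times$-action in a way that absorbs the expected weight $2$, then show the twist is again a quantum comoment map, and finally invoke uniqueness.

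More precisely, for a fixed $t\in\K^\times$ I would set
\[
\psi_\hbar(\xi):=t^{-2}\bigl(t.\varphi_\hbar(\xi)\bigr),\qquad \xi\in\g,
\]
and check that $\psi_\hbar$ is a quantum comoment map for $\Dcal$. There are three things to verify: $G$-equivariance, the commutator identity $\frac{1}{\hbar^2}[\psi_\hbar(\xi),\cdot]=\xi_{\Dcal}$, and the congruence $\psi_\hbar(\xi)\equiv\mu^*(\xi)\pmod{\hbar}$. $G$-equivariance is immediate from the fact that the $G$- and $\K^\times$-actions on $\Dcal$ commute. For the commutator identity, the key computation is that, since $t.\hbar=t\hbar$ and $t$ acts by algebra automorphisms, $t.\bigl(\tfrac{1}{\hbar^2}[a,b]\bigr)=t^{-2}\,\tfrac{1}{\hbar^2}[t.a,t.b]$ for any local sections $a,b$; applying $t$ to $\xi_{\Dcal}(b)=\tfrac{1}{\hbar^2}[\varphi_\hbar(\xi),b]$ and using that $\xi_{\Dcal}$ commutes with the $\K^\times$-action gives exactly $\xi_{\Dcal}(c)=\tfrac{1}{\hbar^2}[\psi_\hbar(\xi),c]$ after substituting $c=t.b$.

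For the congruence mod $\hbar$, I would use that $\theta$ is $\K^\times$-equivariant (condition (4) in the definition of a graded quantization), so $\psi_\hbar(\xi)\bmod\hbar=t^{-2}\bigl(t.\mu^*(\xi)\bigr)$ as an element of $\Str_X$. The $\K^\times$-action on $X$ was arranged in Subsection \ref{SUBSECTION_quant_generalities} so that $\mu:X\to\g^*$ is $\K^\times$-equivariant with $t.\alpha=t^{-2}\alpha$ on $\g^*$; unwinding the conventions one finds $t.\mu^*(\xi)=t^{2}\mu^*(\xi)$ as a function on $X$, and hence $\psi_\hbar(\xi)\equiv\mu^*(\xi)\pmod{\hbar}$.

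With all three properties verified, the uniqueness statement of Proposition \ref{Prop:qcm_exist} forces $\psi_\hbar=\varphi_\hbar$, which is precisely the desired identity $t.\varphi_\hbar(\xi)=t^2\varphi_\hbar(\xi)$ for every $t\in\K^\times$. There is no genuine obstacle here; the only care needed is bookkeeping the weight of $\hbar$ and the sign conventions for the $\K^\times$-action on $\g^*$ when checking the mod-$\hbar$ step, since an off-by-a-factor error there would wreck the whole argument.
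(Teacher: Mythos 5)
Your proposal is correct and is exactly the paper's argument: the paper also observes that $\xi\mapsto t^{-2}(t.\varphi_\hbar(\xi))$ is again a quantum comoment map and concludes by the uniqueness part of Proposition \ref{Prop:qcm_exist}. You have merely filled in the routine verifications (which check out, including the weight bookkeeping for $\hbar$ and for $\mu^*$) that the paper leaves implicit.
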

\begin{proof}
We remark that $\xi\mapsto t^{-2} (t.\varphi_\hbar(\xi))$ is again a quantum comoment map.
Now the claim follows from the uniqueness of the quantum comoment map.
\end{proof}

Now let $X$ be a covering of a nilpotent orbit $\Orb$ in $\g$. Since $X$ is a homogeneous space
of a simply connected semisimple algebraic group, we have $H^1_{DR}(X)=\{0\}$.
So any $G$-equivariant quantization of $X$ has a unique quantum comoment map.

\section{W-algebras}\label{SECTION_W}
In this section we recall some results about W-algebras.
In Subsection \ref{SUBSECTION_W_generalities} we define a W-algebra $\Walg$ following \cite{GG} (the definition
given there is very close to the original definition of Premet, \cite{Premet1}) and recall a category equivalence
proved by Skryabin in the appendix to \cite{Premet1}.

Subsection \ref{SUBSECTION_Decomp} is devoted to a basic result on W-algebras from \cite{Wquant},
the so called decomposition theorem. It says that a certain completion
$\U^\wedge_\hbar$ of the algebra $\U_\hbar$ decomposes into the completed tensor product
of the completed W-algebra and of a formal Weyl algebra. In Subsection \ref{SUBSECTION_correspondence} we use this result to establish a correspondence between the sets of ideals in $\Walg,\U,\U^\wedge_\hbar$ also originally
obtained in \cite{Wquant}.

Finally, in Subsection \ref{SUBSECTION_W_onedim} we recall some known results on 1-dimensional
$\Walg$-modules.

\subsection{Generalities}\label{SUBSECTION_W_generalities}
In this subsection we will sketch a definition of a W-algebra associated to $(\g,\Orb)$ (due to Premet, \cite{Premet1}).

Let $e\in \Orb$. Recall an $\sl_2$-triple $(e,h,f)$, the subgroup $Q:=Z_G(e,h,f)$ and a
homomorphism $\gamma:\K^\times\rightarrow G$ considered in Subsection \ref{SUBSECTION_quant_generalities}.


A W-algebra $\Walg$  can be defined as a quantum Hamiltonian reduction $$(\U/\U\m_\chi)^{\ad\m}:=\{a+ \U\m_\chi|
[\xi,a]\in \U\m_\chi, \forall \xi\in \m\},$$
where $\m\subset\g$ is a subalgebra, $\chi:\m\rightarrow \K$ is a character, both to be specified below, $\m_\chi:=\{\xi-\langle\chi,\xi\rangle,\xi\in \m\}$.

The subalgebra $\m\subset\g$ and the character $\chi$ are constructed as follows. Consider the grading
$\g=\bigoplus_{i\in \Z}\g(i)$, where $\g(i):=\{[h,\xi]=i\xi\}$. Set $\chi:=(e,\cdot)\in\g^*$ and consider
the skew-symmetric form $\omega_\chi(\xi,\eta)=\langle\chi,[\xi,\eta]\rangle$ on $\g$. The restriction of
$\omega_\chi$ to $\g(-1)$ is non-degenerate. Pick a lagrangian subspace $l\subset \g(-1)$ and
set $\m:=l\oplus\bigoplus_{i\leqslant -2}\g(i)$. The restriction of $\chi$ to $\m$ is a character of
$\m$.

The algebra $\Walg$ has a nice filtration called the Kazhdan filtration. It is inherited from
a {\it Kazhdan filtration} on $\U$ defined as follows.  Let $\F_k^{st}\U$ denote the standard PBW
filtration on $\U$ and $\U(j):=\{u\in \U| [h,u]=ju\}$. Then the Kazhdan filtration
$\KF_i \U$ on $\U$ is defined by $\KF_i\U:=\sum_{j+2k\leqslant i}\F_k^{st}\U\cap \U(j)$.
We have the induced filtrations $\KF_i (\U/\U\m_\chi),\KF_i\Walg$ on $\U/\U\m_\chi,\Walg$,
respectively. We remark that $\KF_0 (\U/\U\m_\chi)$ is spanned by the image of $1\in \U$.
It follows that $\KF_0\Walg$ is spanned by the unit of $\Walg$.

The associated graded algebra of $\Walg$ has the following nice description.
Define a {\it Slodowy slice} $S$ by $S:=e+\z_\g(f)$. This is an affine subspace in $\g$ but it will
be more convenient for us to consider it as a subspace in $\g^*$.
Consider a {\it Kazhdan} $\K^\times$-action on $\g^*$ defined by
$t.\alpha= t^{-2}\gamma(t)\alpha$, where $\gamma:\K^\times\rightarrow G$
was defined in Subsection \ref{SUBSECTION_quant_generalities}. We remark that $\chi$ is $\K^\times$-invariant and that
$\K^\times$ preserves $S$. Moreover, the $\K^\times$-action on $S$ is contracting: $\lim_{t\rightarrow \infty}t.s=\chi$
for any $s\in S$. So $\K[S]$ comes equipped with a positive grading $\K[S]=\bigoplus_{i\geqslant 0}\K[S]_i$
with $\K[S]_0=\K$. As Premet proved in \cite{Premet1}, $\gr\Walg\cong \K[S]$.

The $S(\g)$-module $\gr \U/\U\m_\chi$ also has a nice description, see \cite{GG}. Namely,
a natural homomorphism $S(\g)/S(\g)\m_\chi\rightarrow \gr \U/\U\m_\chi$ is a bijection.
Also, as was shown by Gan and Ginzburg, there is a natural identification
$S(\g)/S(\g)\m_\chi\cong \K[M]\otimes \K[S]$, where $M$ is the unipotent subgroup
of $G$ with Lie algebra $\m$. This identification preserves the gradings, where the
grading on $\K[M]$ is induced by the $\K^\times$-action $(t,m)\mapsto \gamma(t)m\gamma(t)^{-1},
t\in \K^\times, m\in M$.

To finish the subsection let us recall the Skryabin equivalence, see the appendix to \cite{Premet1}.
This is an equivalence between the category $\Walg$-$\operatorname{Mod}$ of left $\Walg$-modules and the category
 $\operatorname{Wh}$ of {\it Whittaker}
$\U$-modules. By definition, a  left $\U$-module $M$ is called {\it Whittaker}
if $\m_\chi$ acts on $M$ by locally nilpotent endomorphisms. According to Skryabin,
the functor $N\mapsto (\U/\U\m_\chi)\otimes_{\Walg}N$ is an equivalence between the categories $\Walg$-$\operatorname{Mod}$ and $\operatorname{Wh}$. A quasiinverse equivalence sends $M\in \operatorname{Wh}$ to
the space $M^{\m_\chi}$ of $\m_\chi$-invariants.

Now let $N$ be a finitely generated $\Walg$-module. Equip $N$ with a filtration that is compatible with the Kazhdan filtration on $\Walg$. Assume in addition that $\gr N$ is a finitely generated $\K[S]$-module.
Then one can equip $\U/\U\m_\chi\otimes_{\Walg} N$ with the product filtration. The following result was obtained
in the proof of Theorem 6.1 in \cite{GG}.

\begin{Lem}\label{Lem:GG_filtr}
The natural homomorphism $$\K[M]\otimes \gr N=(\gr \U/\U\m_\chi)\otimes_{\K[S]}\gr N\rightarrow \gr \left((\U/\U\m_\chi)\otimes_{\Walg}N\right)$$ is an isomorphism.
\end{Lem}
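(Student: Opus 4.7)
My plan is to establish the isomorphism by constructing a filtered free right $\Walg$-module basis of $\U/\U\m_\chi$ that lifts a homogeneous basis of $\K[M]$, and then reading off both sides of the desired identification in this basis. Surjectivity of the natural map is immediate from the definition of the product filtration: $\KF_k((\U/\U\m_\chi)\otimes_\Walg N)$ is by definition spanned by tensors $a\otimes n$ with $a\in\KF_i(\U/\U\m_\chi)$, $n\in\KF_j N$, $i+j\leqslant k$, and the class of such a tensor in the associated graded is visibly the image of $\bar a\otimes_{\K[S]}\bar n$. So only injectivity requires work.

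For injectivity, I lift a homogeneous $\K$-basis of $\K[M]$ to a filtered right $\Walg$-basis of $\U/\U\m_\chi$. Since $M$ is unipotent, $\K[M]$ is a polynomial algebra with nonnegative grading and finite-dimensional graded pieces; pick a homogeneous $\K$-basis $\{\bar y_\alpha\}$ with $\deg\bar y_\alpha=d_\alpha$, and under the Gan--Ginzburg identification view $\bar y_\alpha\otimes 1$ as elements of $\gr_{d_\alpha}(\U/\U\m_\chi)$. Choose lifts $y_\alpha\in\KF_{d_\alpha}(\U/\U\m_\chi)$ and form the filtered right $\Walg$-module map $\Psi\colon\bigoplus_\alpha\Walg\to \U/\U\m_\chi$, $(w_\alpha)\mapsto \sum_\alpha y_\alpha w_\alpha$, where the $\alpha$-th summand is Kazhdan-shifted by $d_\alpha$. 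On associated graded, $\gr\Psi$ is precisely the Gan--Ginzburg isomorphism $\K[M]\otimes\K[S]\cong \gr(\U/\U\m_\chi)$ of $\K[S]$-modules, hence bijective. Because the Kazhdan filtration on $\U/\U\m_\chi$ is exhaustive, separated, and bounded below (inherited from $\K[M]\otimes\K[S]$ being nonnegatively graded with finite-dimensional pieces in each total degree), an induction on Kazhdan degree upgrades the bijectivity of $\gr\Psi$ to bijectivity of $\Psi$ itself.

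Granted this free basis, $(\U/\U\m_\chi)\otimes_\Walg N=\bigoplus_\alpha y_\alpha\otimes N$ with product filtration $\KF_k=\bigoplus_\alpha y_\alpha\otimes \KF_{k-d_\alpha} N$, whose associated graded is $\bigoplus_\alpha\bar y_\alpha\otimes\gr N\cong \K[M]\otimes\gr N$, matching the source of the claimed isomorphism term-by-term and compatibly with the natural map. The main obstacle is the lifting step: although the Kazhdan filtration on $\U$ itself is unbounded below, the quotient $\U/\U\m_\chi$ inherits a bounded-below, well-behaved filtration via the Gan--Ginzburg identification, which is precisely what allows the standard filtered-to-associated-graded argument to succeed. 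Once this lifting is in hand, the remainder of the proof is formal.
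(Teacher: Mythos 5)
Your proof is correct, and it is essentially the argument underlying the source the paper cites for this lemma (the paper gives no proof of its own, only a reference to the proof of Theorem 6.1 in Gan--Ginzburg): one lifts a homogeneous basis of $\K[M]$ to a filtered free right $\Walg$-module basis of $\U/\U\m_\chi$, which is exactly how Gan--Ginzburg establish freeness, and then the statement about $\gr$ of the tensor product follows formally as you describe. Your identification of the key point --- that the Kazhdan filtration on $\U/\U\m_\chi$, unlike that on $\U$, is bounded below with finite-dimensional graded contributions from $\K[M]$, so the filtered-to-graded induction terminates and no completion is needed --- is precisely the right one.
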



%

\subsection{Decomposition theorem}\label{SUBSECTION_Decomp}

The most crucial property of W-algebras we need is the decomposition theorem, see \cite{Wquant}, Subsection 3.3,
and \cite{HC}, Subsection 2.3. This theorem asserts that, up to a suitably understood completion,
the universal enveloping algebra $\U$ of $\g$ is decomposed into the tensor product of the
W-algebra and of certain Weyl algebra. It will be convenient for us to work with "homogeneous"
versions of our algebras.

Equip $\U$ with the "doubled" standard filtration $\F_i\U$, where $\F_i\U$ is spanned by all monomials
$\xi_1\ldots\xi_k$ with $2k\leqslant i, \xi_1,\ldots,\xi_k\in \g$. Then form the Rees algebra
$R_\hbar(\U)=\bigoplus_{i\geqslant 0}\hbar^i \F_i\U\subset \U[\hbar]$. This Rees algebra is naturally isomorphic to
the algebra $\U_\hbar$ introduced in Subsection \ref{SUBSECTION_qjb}. We have a natural identification
$\U_\hbar/\hbar\U_\hbar=S\g=\K[\g^*]$. Let $I_\chi$ denote the maximal ideal of $\chi$ in $S\g$
and $\widetilde{I}_{\chi}$ be its preimage  under the natural projection $\U_\hbar\twoheadrightarrow S\g$.

A completion of $\U_\hbar$ we need is $\U^\wedge_\hbar:=\varprojlim_{k\rightarrow \infty} \U/\widetilde{I}_{\chi}^k$.
As we have seen in \cite{HC}, Subsection 2.4, $\U^\wedge_\hbar$ can be considered as the space
$\K[\g^*]^\wedge_\chi[[\hbar]]$ equipped with a new (deformed) product, here $\K[\g^*]^\wedge_\chi:=\varprojlim_{k\rightarrow \infty} \K[\g^*]/I_\chi^k$ is the algebra of formal
power series in the neighborhood of $\chi$.

We will need two group actions on $\U^\wedge_\hbar$. Let $Q:=Z_G(e,h,f)$ be the centralizer of
$(e,h,f)$ in $G$. There is a natural $Q$-action on $\U_\hbar$ by graded algebra automorphisms.
This action stabilizes $\widetilde{I}_{\chi}$ and so uniquely extends to a $Q$-action on $\U^\wedge_\hbar$
by topological algebra automorphisms.
 Also there is a Kazhdan action of $\K^\times$ on $\U^\wedge_\hbar$ defined as follows:  an element $t\in \K^\times$ acts on $\hbar^i \F_i\U$ via  $(t,u)\mapsto t^i\gamma(t)u$. This action fixes
$\widetilde{I}_{\chi}$ and again lifts to $\U^\wedge_\hbar$.

Next, consider the homogeneous version $\Walg_\hbar=R_\hbar(\Walg)$ of the W-algebra. Similarly to the previous
paragraph, define the completion $\Walg^\wedge_\hbar$ of $\Walg_\hbar$ with respect to the maximal ideal
of $\chi\in S$. There is a $\K^\times$-action on $\Walg_\hbar$ given by $t.\hbar^i w=t^i \hbar^i w$
that can again can be naturally extended to $\Walg^\wedge_\hbar$.
An important difference of this action from that
on $\U_\hbar^\wedge$ is that $\Walg_\hbar$ coincide with the subalgebra of $\Walg_\hbar^\wedge$
consisting of all $\K^\times$-finite (locally finite in the terminology of \cite{HC}) elements.

The group $Q$ also acts on $\Walg^\wedge_\hbar$ as follows. There is a $Q$-action on $\Walg$
by filtered algebra automorphisms, see \cite{Premet2}, 1.2 or \cite{Wquant}, Subsections 3.1,3.3.
This action gives rise to a $Q$-action on $\Walg_\hbar$ and the latter extends to
$\Walg_\hbar^\wedge$.

Finally, we need a completed version of the Weyl algebra of an appropriate symplectic vector space.
A vector space we need is $V:=\im \ad(f)$. The restriction of $\omega_\chi$ to $V$ is non-degenerate.
We equip $V$ with the $Q$-action restricted from the coadjoint $G$-action and with the $\K^\times$-action given by $t.v=\gamma(t)^{-1}v$. In particular, $t\in \K^\times$ multiplies $\omega_\chi$ by $t^2$.

By definition, the  {\it homogeneous Weyl algebra} $\W_\hbar$ of $V$ is the quotient of $T(V)[\hbar]$
by $u\otimes v-v\otimes u -\hbar^2 \omega_\chi(u,v), u,v\in V$. Let $\W_\hbar^\wedge$ denote the completion of
$\W_\hbar$ at 0.

The algebras $\U^\wedge_\hbar, \Walg^\wedge_\hbar, \W^\wedge_\hbar$ have  natural topologies: the
topologies of inverse limits.


Consider the completed tensor product $\W_\hbar^\wedge(\Walg^\wedge_\hbar):=\W_\hbar^\wedge \widehat{\otimes}_{\K[[\hbar]]} \Walg^\wedge_\hbar$ (i.e., we first take the usual tensor
product of the topological algebras and then complete it with respect to the induced topology).
%
The decomposition theorem is the following statement.

\begin{Prop}\label{Prop:decomposition}
There is a $Q\times \K^\times$-equivariant  isomorphism $\Psi_\hbar: \U^\wedge_\hbar\rightarrow \W_\hbar^\wedge(\Walg_\hbar^\wedge)$
of topological $\K[[\hbar]]$-algebras.
\end{Prop}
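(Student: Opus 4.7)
The plan is to obtain $\Psi_\hbar$ by lifting a classical Poisson isomorphism to the quantum level, following the strategy of \cite{Wquant}, Subsection 3.3. The idea is that $\g^*$ formally decomposes near $\chi$ as a product of the coadjoint-orbit direction and the Slodowy slice direction, and uniqueness of $Q \times \K^\times$-equivariant quantizations with contracting $\K^\times$-action forces this decomposition to persist after quantization.

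First I would establish the classical isomorphism $\K[\g^*]^\wedge_\chi \cong \K[V]^\wedge_0 \,\widehat{\otimes}\, \K[S]^\wedge_\chi$ of $Q \times \K^\times$-equivariant Poisson algebras. At $\chi$ one has the decomposition $\g = V \oplus \ker\ad(f)$, which is $Q$-stable and compatible with the Kazhdan grading; here $V = \im \ad(f)$ is a symplectic subspace for $\omega_\chi$, and $\ker\ad(f)$ is tangent to the Slodowy slice $S = \chi + \ker\ad(f)$. A $Q \times \K^\times$-equivariant formal Moser argument, using that $Q$ is reductive and that the Kazhdan action contracts the formal neighborhood of $\chi$ to $\chi$, upgrades the linear splitting to a formal Poisson isomorphism; the right-hand factor recovers $\K[S]^\wedge_\chi$ equipped with the Poisson bracket coming from the transversality of $S$ to $G\chi$.

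Next I would construct the quantum lift. Both $\U^\wedge_\hbar$ and $\W_\hbar^\wedge(\Walg^\wedge_\hbar)$ are complete separated flat $\K[[\hbar]]$-algebras with classical limit $\K[\g^*]^\wedge_\chi \cong \K[V]^\wedge_0 \,\widehat{\otimes}\, \K[S]^\wedge_\chi$, and both carry $Q \times \K^\times$-actions compatible with the classical limit. I would build $\Psi_\hbar$ order by order in the Kazhdan grading, choosing at each stage a $Q$-invariant lift of the previously constructed approximation. The obstruction to passing from Kazhdan order $n$ to Kazhdan order $n+1$ lies in a $Q$-invariant Hochschild $2$-cocycle on $\K[\g^*]^\wedge_\chi$ of positive Kazhdan degree; because the Kazhdan action contracts to $\chi$ and $Q$ is reductive, averaging over $Q$ together with the Euler-style integration operator of the contracting grading realizes any such cocycle as a coboundary. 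Passage to the inverse limit produces the required equivariant algebra isomorphism, and the uniqueness of the quantum comoment map (Proposition \ref{Prop:qcm_exist}) ensures that $\Psi_\hbar$ intertwines the two distinguished copies of $\g$ inside each side.

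The hard part is matching the two sides of the decomposition on the $\Walg$ factor: one must verify that the W-algebra $\Walg^\wedge_\hbar$, defined via the Hamiltonian reduction $(\U/\U\m_\chi)^{\ad\m}$, corresponds under $\Psi_\hbar$ precisely to the Slodowy-slice tensor factor of the target rather than to some twisted subalgebra. This amounts to comparing reduction by $\m_\chi$ inside $\U^\wedge_\hbar$ with a geometric reduction along the symplectic complement $V$ after completion, which is where Lemma~\ref{Lem:GG_filtr} (the classical PBW-type statement) and the non-degeneracy of $\omega_\chi$ on $V$ enter crucially.
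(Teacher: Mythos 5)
The paper does not prove this statement; it cites \cite{Wquant}, Theorem 3.3.1. Measured against that proof, your proposal has a genuine gap at its central step. You propose to build $\Psi_\hbar$ order by order as an isomorphism between two given flat deformations of $\K[\g^*]^\wedge_\chi$, claiming the obstruction $2$-cocycles die because the Kazhdan action is contracting and $Q$ is reductive. That is not enough: the relevant cohomology of $\K[\g^*]^\wedge_\chi$ (Hochschild, or the Poisson cohomology controlling equivalences of its quantizations) is large and is not killed by positivity of Kazhdan degree --- the Poisson structure here is far from symplectic, so no Darboux-type rigidity is available for the whole algebra. Indeed, if the argument worked as stated it would show that \emph{any} $Q\times\K^\times$-equivariant quantization of $\K[S]^\wedge_\chi$ tensored with $\W_\hbar^\wedge$ is isomorphic to $\U^\wedge_\hbar$, which is not what the theorem says and is not known. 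The actual proof isolates the one factor that \emph{is} rigid: one first constructs a $Q\times\K^\times$-equivariant embedding $\W_\hbar^\wedge\hookrightarrow\U_\hbar^\wedge$ lifting $V\hookrightarrow\g$ (here the obstructions live in the Hochschild cohomology of the Weyl algebra, which vanishes, and equivariance is arranged by averaging); a structural lemma then gives $\U^\wedge_\hbar\cong\W^\wedge_\hbar\widehat{\otimes}Z$ with $Z$ the centralizer of the image; finally $Z$ is identified with $\Walg^\wedge_\hbar$ by comparing with the Hamiltonian reduction $(\U/\U\m_\chi)^{\ad\m}$ and Premet's isomorphism $\gr\Walg\cong\K[S]$. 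Your closing paragraph gestures at step (iii), but your step 2 presupposes the answer rather than producing it.

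A smaller but real error: $\g=V\oplus\ker\ad(f)$ is false (already for $\sl_2$, where $\im\ad(f)\cap\ker\ad(f)=\K f$). The decompositions actually used are $\g=\im\ad(f)\oplus\z_\g(e)$, exhibiting $V$ as a symplectic complement to the radical of $\omega_\chi$, and $\g=\im\ad(e)\oplus\z_\g(f)$, exhibiting $S=\chi+\z_\g(f)$ as a transverse slice; $V$ is a model for the leaf direction but is not the tangent space to the orbit at $\chi$. This matters when setting up the classical Darboux--Weinstein splitting you invoke in your first step.
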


For the proof see \cite{Wquant}, Theorem 3.3.1 and the discussion after the theorem.

In the sequel we identify $\U^\wedge_\hbar$ with $\W_\hbar^\wedge(\Walg_\hbar^\wedge)$ by means
of $\Psi_\hbar$.

\subsection{Correspondence between ideals}\label{SUBSECTION_correspondence}
We  need to relate two-sided ideals in $\U^\wedge_\hbar$ and $\Walg$. More precisely, we consider
two sets: the set $\Id(\Walg)$ of two-sided ideals in $\Walg$ and the set $\Id_\hbar(\U^\wedge_\hbar)$
consisting of all $\K^\times$-stable $\hbar$-saturated two-sided ideals in $\U^\wedge_\hbar$ (an ideal
$\I'_\hbar\subset \U^\wedge_\hbar$ is said to be $\hbar$-saturated if $\hbar a\in \I'_\hbar$
implies $a\in \I'_\hbar$, equivalently, if the quotient $\U^\wedge_\hbar/\I'_\hbar$ is
a flat $\K[[\hbar]]$-module). Construct a map $\I\mapsto \I^\#:\Id_\hbar(\Walg)\rightarrow \Id_\hbar(\U^\wedge_\hbar)$ is as follows. Pick $\I\in \Id(\Walg)$. Form the Rees ideal
$\I_\hbar:=\bigoplus_{i\geqslant 0} (\F_i\Walg\cap \I)\hbar^i$. Then take the closure
$\I^\wedge_\hbar\subset \Walg^\wedge_\hbar$ of $\I_\hbar$. We set
$\I^\#:=\W^\wedge_\hbar\widehat{\otimes}_{\K[[\hbar]]}\I^\wedge_\hbar\subset \W^\wedge_\hbar(\Walg^\wedge_\hbar)=\U^\wedge_\hbar$.

\begin{Lem}\label{Lem:maps_btw_ideals2}
The map $\I\mapsto \I^\#:\Id(\Walg)\rightarrow \Id_\hbar(\U^\wedge_\hbar)$ is a bijection.
The inverse map sends $\J'_\hbar\in \Id_\hbar(\U^\wedge_\hbar)$ to the image, denoted by $\J'_{\hbar\#}$, of
$\J'_\hbar\cap \Walg_\hbar$ under the natural epimorphism $\Walg_\hbar\twoheadrightarrow \Walg$.
\end{Lem}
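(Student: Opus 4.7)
The plan is to factor the asserted bijection as a composition of three simpler equivalences. With $\Id^{\mathrm{gr}}(\Walg_\hbar)$ denoting graded $\hbar$-saturated ideals of $\Walg_\hbar$ and $\Id^{\K^\times}(\Walg^\wedge_\hbar)$ denoting closed $\K^\times$-stable $\hbar$-saturated ideals of $\Walg^\wedge_\hbar$, the chain will be
\[
\Id(\Walg) \;\longleftrightarrow\; \Id^{\mathrm{gr}}(\Walg_\hbar) \;\longleftrightarrow\; \Id^{\K^\times}(\Walg^\wedge_\hbar) \;\longleftrightarrow\; \Id_\hbar(\U^\wedge_\hbar),
\]
and I will check that the composite forward map coincides with $\I\mapsto\I^\#$ by unwinding definitions.

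The first bijection is the standard Rees equivalence: every two-sided ideal $\I\subset\Walg$ carries the induced Kazhdan filtration, and the assignment $\I\mapsto\bigoplus_i(\F_i\Walg\cap\I)\hbar^i$ has inverse $J\mapsto J/(\hbar-1)J$. The second bijection is given by closure in the topology defined by the maximal ideal of $\chi$ on one side, and by taking the $\K^\times$-finite part on the other. These are mutually inverse because the Kazhdan action is contracting on this maximal ideal, so (as already noted in the paper) the $\K^\times$-finite elements of $\Walg^\wedge_\hbar$ are exactly $\Walg_\hbar$, and a graded ideal of $\Walg_\hbar$ is recovered from its closure as the $\K^\times$-finite part. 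The $\hbar$-saturation hypothesis propagates through both operations by $\hbar$-adic separatedness.

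The third and main step is the assertion that $K\mapsto\W^\wedge_\hbar\,\widehat{\otimes}\,K$ defines a bijection between $\Id^{\K^\times}(\Walg^\wedge_\hbar)$ and $\Id_\hbar(\U^\wedge_\hbar)$, with inverse $\J'_\hbar\mapsto\J'_\hbar\cap(1\,\widehat{\otimes}\,\Walg^\wedge_\hbar)$. The inclusion $\W^\wedge_\hbar\,\widehat{\otimes}\,K\subset\J'_\hbar$ with $K$ so defined is immediate. For the reverse inclusion the key input is that the Weyl factor contributes no non-trivial two-sided ideals in the $\K^\times$-stable setting: since the Kazhdan action on $V$ has strictly positive weights, any $\K^\times$-stable two-sided ideal of the reduction $\K[[V]]\,\widehat{\otimes}\,(\Walg^\wedge_\hbar/\hbar\Walg^\wedge_\hbar)$ is of the form $\K[[V]]\,\widehat{\otimes}\,\bar K$. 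This gives the factorization modulo $\hbar$, and $\hbar$-saturation together with $\hbar$-adic completeness lift it to the desired factorization of $\J'_\hbar$ itself. Tracing the three inverse maps then shows that the composite inverse is precisely $\J'_\hbar\mapsto\J'_{\hbar\#}$ as in the statement.

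The main obstacle will be the third step, where the interplay between the completed tensor product structure, $\K^\times$-finiteness, and $\hbar$-adic completion has to be handled with care; in particular, one needs to argue that $\K^\times$-equivariance forces a would-be obstruction in the Weyl direction to collapse, which ultimately rests on the positivity of the $\K^\times$-weights on $V$. Once that factorization through $\Walg^\wedge_\hbar$ is established, the outer two equivalences are purely formal consequences of the Rees and $\K^\times$-finiteness dictionaries.
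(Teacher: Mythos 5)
Your three-step factorization (Rees dictionary, completion versus $\K^\times$-finite part, splitting off the Weyl factor) is exactly the architecture of the proof in the reference the paper cites for this lemma ([HC], Proposition 3.3.1 — the paper itself gives no argument), and the two outer equivalences together with the identification of the composite inverse with $\J'_\hbar\mapsto\J'_{\hbar\#}$ are fine. The gap is in your justification of the third, crucial step. First, its premise is false: the Kazhdan $\K^\times$-action on $V=\im\ad(f)$ does not have strictly positive weights. The weight of $V\cap\g(i)$ is $-i$, and $V$ meets $\g(i)$ whenever $\g(i+2)\neq 0$, so already for the regular nilpotent in $\sl_3$ the weight $-2$ occurs. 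Second, even granting positivity, the mod-$\hbar$ claim is wrong as stated: the reduction is the commutative algebra $\K[[V]]\,\widehat{\otimes}\,\K[S]^\wedge_\chi$, and the $\K^\times$-stable ideal generated by $V$ is not of the form $\K[[V]]\,\widehat{\otimes}\,\bar{K}$. What actually forces the splitting is not the grading but the fact that the reduction of an $\hbar$-saturated two-sided ideal is a \emph{Poisson} ideal, hence stable under the derivations $\{v,\cdot\}$, $v\in V$, which are precisely the translation-invariant vector fields in the $V$-directions; only then does a formal Taylor-expansion (Kashiwara-type) argument give $\bar{\J}=\K[[V]]\,\widehat{\otimes}\,(\bar{\J}\cap\K[S]^\wedge_\chi)$.

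Third, passing from the mod-$\hbar$ splitting to a splitting of $\J'_\hbar$ itself is where the real work sits and cannot be dismissed as ``$\hbar$-saturation plus completeness'': the order-by-order induction needs the same Poisson-type stability at every stage, applied to ideals that are no longer reductions of the original one. The clean route — and the one the cited proof takes — is to run the Kashiwara argument directly in $\U^\wedge_\hbar$: for $v\in V$ and $a\in\J'_\hbar$ one has $[v,a]\in\J'_\hbar\cap\hbar^2\U^\wedge_\hbar$, so $\hbar^{-2}\ad(v)$ preserves $\J'_\hbar$ precisely because of $\hbar$-saturation; under the identification $\U^\wedge_\hbar\cong\W^\wedge_\hbar\,\widehat{\otimes}\,\Walg^\wedge_\hbar$ these operators are the constant-coefficient derivations of the Weyl factor (they kill $\Walg^\wedge_\hbar$ and send $u\in V$ to $\omega_\chi(v,u)$), and applying them to the topological expansion of $a$ over $\Walg^\wedge_\hbar$ shows each coefficient lies in $\J'_\hbar\cap\Walg^\wedge_\hbar$. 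Replace your third step by this argument (noting where two-sidedness and $\hbar$-saturation, rather than the $\K^\times$-action, carry the load) and the proof goes through.
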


This is proved in \cite{HC}, Proposition 3.3.1.

\begin{Rem}\label{Rem:ideal_stability}
It follows from the definition that the map $\I\mapsto \I^{\#}$ is $Q$-equivariant. We remark that
any element of $\Id_\hbar(\U^\wedge_\hbar)$ is $Z_G(e)^\circ$-stable because the differential of the
$Z_G(e)$-action on $\U^\wedge_\hbar$ is given by $\xi\mapsto \frac{1}{\hbar^2}[\xi,\cdot]$. In particular,
it follows that any element of $\Id(\Walg)$ is $Q^\circ$-stable. So we see that
the component group $C(e):=Q/Q^\circ=Z_G(e)/Z_G(e)^\circ$ acts on both $\Id(\Walg)$ and $\Id_\hbar(\U^\wedge_\hbar)$.
\end{Rem}

The following lemma follows directly from the construction of the map $\I\mapsto \I^{\#}$.

\begin{Lem}\label{Lem:maps_btw_ideals}
$\dim \Walg/\I=1$ if and only if $(\U^\wedge_\hbar/\I^\#)\hbar(\U^\wedge_\hbar/\I^\#)$
coincides with the completion $\K[\Orb]^\wedge_\chi$ of $\K[\Orb]$ at $\chi$.
\end{Lem}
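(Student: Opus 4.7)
The plan is to leverage the decomposition theorem (Proposition \ref{Prop:decomposition}) to reduce the statement about $\U^\wedge_\hbar/\I^\#$ to one about $\Walg^\wedge_\hbar/\I^\wedge_\hbar$. By the very construction of $\I^\#$ we have $\I^\# = \W^\wedge_\hbar \,\widehat{\otimes}_{\K[[\hbar]]}\, \I^\wedge_\hbar$ inside $\U^\wedge_\hbar = \W^\wedge_\hbar \,\widehat{\otimes}\, \Walg^\wedge_\hbar$, and hence
\[
\U^\wedge_\hbar/\I^\# \;\cong\; \W^\wedge_\hbar \,\widehat{\otimes}_{\K[[\hbar]]}\, (\Walg^\wedge_\hbar/\I^\wedge_\hbar).
\]
Reducing modulo $\hbar$ and using $\W^\wedge_\hbar/\hbar\W^\wedge_\hbar \cong S(V)^\wedge_0$ yields
\[
\bigl(\U^\wedge_\hbar/\I^\#\bigr)\big/\hbar\bigl(\U^\wedge_\hbar/\I^\#\bigr) \;\cong\; S(V)^\wedge_0 \,\widehat{\otimes}\, \bigl(\K[S]^\wedge_\chi/\overline{\gr\I}\bigr),
\]
where $\overline{\gr\I}$ denotes the closure in $\K[S]^\wedge_\chi$ of the homogeneous ideal $\gr\I \subset \gr\Walg = \K[S]$.

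Next I would record the classical shadow of Proposition \ref{Prop:decomposition}: setting $\hbar = 0$ produces a Poisson isomorphism $\K[\g^*]^\wedge_\chi \cong S(V)^\wedge_0 \,\widehat{\otimes}\, \K[S]^\wedge_\chi$ of complete local algebras, under which the inclusion $\Orb \hookrightarrow \g^*$ pulls back to the first-factor inclusion. This rests on the transversality of the Slodowy slice $S$ to $\Orb$ at $\chi$, equivalently on $S\cap\Orb = \{\chi\}$ scheme-theoretically in a formal neighborhood of $\chi$, a statement built into the Kostant--Gan--Ginzburg description $S(\g)/S(\g)\m_\chi \cong \K[M]\otimes \K[S]$ recalled in Subsection \ref{SUBSECTION_W_generalities}. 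Consequently $\K[\Orb]^\wedge_\chi \cong S(V)^\wedge_0$.

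Combining the two steps, the condition of the lemma reduces to $\K[S]^\wedge_\chi/\overline{\gr\I} = \K$, i.e.\ to $\overline{\gr\I}$ being the maximal ideal of $\chi$ in $\K[S]^\wedge_\chi$. Since $\gr\I$ is homogeneous for the positive Kazhdan grading and the Kazhdan $\K^\times$-action contracts $S$ onto the point $\chi$, this is in turn equivalent to $\gr\I = \bigoplus_{i>0}\K[S]_i$, hence to $\dim \K[S]/\gr\I = 1$; a standard Rees/associated-graded comparison then gives $\dim\Walg/\I = 1$. The only step whose content is genuinely geometric, rather than formal manipulation of completions, is the identification $\K[\Orb]^\wedge_\chi \cong S(V)^\wedge_0$, and this is where I expect the bulk of the care to be required; everything else is bookkeeping of Rees constructions, closures, and $\hbar$-reductions.
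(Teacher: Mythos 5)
Your argument is correct and is exactly the intended one: the paper offers no written proof (it merely asserts the lemma ``follows directly from the construction of $\I\mapsto\I^\#$''), and the route you take --- reduce mod $\hbar$ through the decomposition $\U^\wedge_\hbar\cong\W^\wedge_\hbar\widehat{\otimes}\Walg^\wedge_\hbar$, identify $\K[\Orb]^\wedge_\chi$ with $S(V)^\wedge_0$ via transversality of the Slodowy slice, and use positivity of the Kazhdan grading to pass from the closure of $\gr\I$ to $\gr\I$ itself --- is precisely the content being taken for granted. No gaps.
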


Following \cite{Wquant}, define a map $\bullet^\dagger:\Id(\Walg)\rightarrow \Id(\U)$ by
setting $\I^\dagger$ to be the image of $\I^{\#}\cap\U_\hbar$ in $\U$. The map $\I\mapsto
\I^\dagger$ has the following properties (see \cite{Wquant}, Theorem 1.2.1 and \cite{HC}, Conjecture 1.2.1).

\begin{Prop}\label{Prop:up_dag}
\begin{enumerate}
\item If $\I$ is primitive, then so is $\I^\dagger$.
\item If $\dim \Walg/\I<\infty$, then the associated variety $\VA(\U/\I^\dagger)$
coincides with $\Orb$.
\item Let $\J$ be a primitive ideal in $\U$ with $\VA(\U/\J)=\overline{\Orb}$.
Then the set of $\I\in \Id(\Walg)$ with $\dim \Walg/\I<\infty$ and $\I^\dagger=\J$ is a single
$C(e)$-orbit.
\end{enumerate}
\end{Prop}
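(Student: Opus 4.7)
The plan is to prove the three parts in sequence using the Skryabin equivalence from Subsection~\ref{SUBSECTION_W_generalities}, the decomposition theorem (Proposition~\ref{Prop:decomposition}), and the bijection of Lemma~\ref{Lem:maps_btw_ideals2} between $\Id(\Walg)$ and $\Id_\hbar(\U^\wedge_\hbar)$. For part (1), pick a simple $\Walg$-module $N$ with $\I=\Ann_\Walg(N)$ and let $L:=(\U/\U\m_\chi)\otimes_\Walg N$, the corresponding simple Whittaker $\U$-module. I would prove $\I^\dagger=\Ann_\U(L)$. For the inclusion $\I^\dagger\subseteq \Ann_\U(L)$, equip $N$ with a good filtration compatible with the Kazhdan filtration on $\Walg$, form the Rees module $N_\hbar$ over $\Walg_\hbar$, complete at $\chi$ to obtain a $\Walg^\wedge_\hbar$-module killed by $\I^\wedge_\hbar$, and tensor with $\W^\wedge_\hbar$ over $\K[[\hbar]]$; by the decomposition theorem this produces a $\U^\wedge_\hbar$-module killed by $\I^\#$ whose $\K^\times$-finite part specialises back to $L$ upon setting $\hbar=1$, so $\I^\dagger$ acts by zero on $L$. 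For the reverse containment, any primitive ideal $\J\supsetneq \I^\dagger$ annihilating $L$ would lift, via the Rees--completion procedure, to an element of $\Id_\hbar(\U^\wedge_\hbar)$ strictly containing $\I^\#$, whose $\bullet_\#$-image in $\Walg$ would (by Lemma~\ref{Lem:maps_btw_ideals2}) strictly contain $\I$ yet still annihilate $N$, contradicting $\I=\Ann_\Walg(N)$.

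For part (2), apply Lemma~\ref{Lem:GG_filtr} with $N:=\Walg/\I$ and $L$ as above, to obtain
\[
\gr L\;\cong\;\K[M]\otimes \gr N
\]
as a module over $S(\g)/S(\g)\m_\chi\cong \K[M]\otimes \K[S]$. Since $N$ is finite-dimensional and $\K[S]$ carries a positive grading with $\K[S]_0=\K$, the module $\gr N$ is supported at the single point $\chi\in S$, so $\gr L$ is supported on $M\cdot\chi\subset\g^*$, which lies in $\Orb$ because $\chi$ corresponds to $e\in\Orb$ and $M\subset G$. Consequently $\chi\in\VA(L)\subset\overline{\Orb}$. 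Since $\I^\dagger\subseteq\Ann_\U(L)$, any good filtration compatibility makes $\gr L$ a $\gr(\U/\I^\dagger)$-module, forcing $\VA(L)\subseteq\VA(\U/\I^\dagger)$; being $G$-stable, the latter contains $\overline{G\cdot\chi}=\overline{\Orb}$. The reverse inclusion $\VA(\U/\I^\dagger)\subseteq\overline{\Orb}$ follows by tracking $\gr \I^\dagger$ through the Rees--completion description of $\I^\#$: since $\I^\#/\hbar\I^\#$ is (locally at $\chi$) the pullback of the Weyl-factor part of $\U^\wedge_\hbar/\hbar\U^\wedge_\hbar=\K[\g^*]^\wedge_\chi$, its support at $\chi$ already lies in $\overline{\Orb}$, and the $\K^\times$-contracting Kazhdan action propagates this bound globally.

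Part (3) is the main obstacle: one must show that the fiber of $\bullet^\dagger$ over a primitive ideal $\J\in\Id(\U)$ with $\VA(\U/\J)=\overline{\Orb}$ is a single $C(e)$-orbit. Remark~\ref{Rem:ideal_stability} renders the fiber $C(e)$-stable, and the difficulty is transitivity. The plan is to follow the Harish-Chandra bimodule strategy of \cite{Wquant}: given two finite-codimensional ideals $\I_1,\I_2$ in the fiber, construct a $C(e)$-equivariant finite-dimensional Harish-Chandra $\Walg$-bimodule interpolating $\Walg/\I_1$ and $\Walg/\I_2$, obtained by applying a suitable functor (built from the decomposition theorem) to a Harish-Chandra $\U$-bimodule annihilated by a power of $\J$ on each side. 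Classifying such finite-dimensional $\Walg$-bimodules as representations of the finite group $C(e)$ then produces an element of $C(e)$ conjugating $\I_1$ to $\I_2$. Verifying that this $C(e)$-equivariant correspondence is well-defined and faithful through the completion at $\chi$, and that it detects equivalence of ideals rather than merely of their reductions, is where the bulk of the technical work resides.
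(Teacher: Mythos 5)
The paper contains no proof of this proposition: it is quoted from \cite{Wquant}, Theorem 1.2.1 and \cite{HC}, Conjecture 1.2.1, so there is no internal argument to compare yours with. Judged on its own merits, your treatment of parts (1) and (2) follows the standard route (Skryabin equivalence plus the decomposition theorem, Proposition \ref{Prop:decomposition}) and is essentially the argument of those references, but two steps are asserted rather than proved. In (1), the containment $\Ann_\U(L)\subseteq\I^\dagger$ needs the faithfulness of the completed module $\W^\wedge_\hbar\widehat{\otimes}N^\wedge_\hbar$ over $\W^\wedge_\hbar\widehat{\otimes}(\Walg^\wedge_\hbar/\I^\wedge_\hbar)$; your ``strictly containing'' argument passes from an ideal of $\U$ to its Rees completion, and that passage is neither injective nor order-strict a priori, so the contradiction you aim for is not yet there. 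In (2), the inclusion $\VA(\U/\I^\dagger)\subseteq\overline{\Orb}$ cannot be read off from the behaviour of $\I^{\#}$ at $\chi$ alone: the formal neighbourhood of $\chi$ sees only orbits whose closures contain $\chi$, so orbits $\Orb'$ with $\chi\notin\overline{\Orb'}$ must be excluded by the global $G$-saturation and transversality argument of \cite{Wquant}, which you gesture at but do not supply.

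The genuine gap is part (3). What you offer there is a programme, not a proof: ``construct a $C(e)$-equivariant finite-dimensional Harish-Chandra $\Walg$-bimodule interpolating $\Walg/\I_1$ and $\Walg/\I_2$'' and then ``classify such bimodules as representations of $C(e)$'' are precisely the hard steps, and neither is carried out. Nothing in your sketch guarantees that the bimodule obtained by applying the completion functor to a Harish-Chandra $\U$-bimodule is finite dimensional, nor that it actually links $\I_1$ to $\I_2$, nor that an isomorphism of bimodules produces a group element of $C(e)$ conjugating one ideal to the other; transitivity does not follow from any naive classification by $C(e)$-representations. The difficulty is not cosmetic: this single-$C(e)$-orbit statement is exactly Conjecture 1.2.1 of \cite{HC} --- an open conjecture in the very reference the proposition cites --- and its proof requires a detailed analysis of the functor $\bullet_\dagger$ on Harish-Chandra bimodules (multiplicity-one statements, behaviour under adjunction) well beyond the decomposition theorem and Lemma \ref{Lem:maps_btw_ideals2}. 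As written, part (3) must be counted as unproved.
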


%
%
%
%

\subsection{1-dimensional representations}\label{SUBSECTION_W_onedim}
In this subsection we will explain known results about 1-dimensional representations of W-algebras.
Let $\Id^1(\Walg)$ denote the set of two-sided ideals of codimension 1 in $\Walg$.
We start with the existence theorem.

\begin{Thm}\label{Thm:one_dim_1}
\begin{itemize}
\item[(i)] If $\g$ is classical, then $\Id^1(\Walg)^{C(e)}\neq \varnothing$.
\item[(ii)] If $\g$ is $G_2,F_4,E_6,E_7$ and $e$ is arbitrary, or $\g$ is $E_8$ and $e$ is not rigid, then
$\Id^1(\Walg)\neq \varnothing$.
\end{itemize}
\end{Thm}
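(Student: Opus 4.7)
The theorem compiles existence results due to the author (in classical types) and to Premet (in exceptional types). My plan has two steps: a uniform reduction to rigid orbits, followed by a case-by-case treatment of rigid orbits.

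The reduction: suppose $e$ is not rigid, so that $\Orb = G\cdot e$ is birationally induced from a pair $(\lf, \Orb_\lf)$ with $\lf \subsetneq \g$ a proper Levi subalgebra and $e_\lf \in \Orb_\lf$. A parabolic-induction procedure for finite W-algebras --- which can be extracted by applying the decomposition theorem (Proposition~\ref{Prop:decomposition}) in parallel to $(\g, e)$ and $(\lf, e_\lf)$ and comparing the two resulting Weyl-algebra factors --- yields a map $\Id^1(\Walg(\lf, e_\lf)) \to \Id^1(\Walg(\g, e))$ compatible with the component-group actions. Hence existence propagates from $(\lf, \Orb_\lf)$ to $(\g, \Orb)$, and induction on the semisimple rank of $\g$ reduces both parts to the case where $\Orb$ is rigid.

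For part (i) it remains to treat rigid orbits in classical types. In $\sl_n$ the only rigid orbit is $\{0\}$, where $\Walg = U(\g)$ and any one-dimensional central character serves (with obvious $C(e)$-invariance). For rigid orbits in types $B, C, D$, I would invoke the author's realization of $\Walg$ via (twisted) shifted Yangians, combined with the Kempken--Spaltenstein classification of rigid orbits; the one-dimensional quotients produced in this way are automatically $C(e)$-invariant because the construction proceeds from $Q$-equivariant data. For part (ii), the non-rigid case is covered by the reduction above (note that the Levi subalgebras in the exceptional types decompose as products of classical and strictly smaller exceptional factors), while for rigid orbits in $G_2, F_4, E_6, E_7$ one cites Premet's case-by-case verifications in \cite{Premet2}. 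The main obstacle --- and the reason rigid orbits in $E_8$ are excluded --- is that no uniform construction is available there, and the case-by-case approach in the rank-$8$ exceptional setting is computationally out of reach; this also explains why the stronger $C(e)$-invariance in (i) does not propagate to (ii).
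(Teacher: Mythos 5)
Your treatment of part (ii) matches the paper's route in outline: reduce to rigid orbits via the fact that existence of a one-dimensional module is preserved under Lusztig--Spaltenstein induction, then handle rigid exceptional orbits case by case. Two corrections, though: the case-by-case verification for rigid orbits in $G_2,F_4,E_6,E_7$ (and only \emph{some} rigid orbits in $E_8$) is due to Goodwin--R\"ohrle--Ubly \cite{GRU}, not to \cite{Premet2}; and the induction theorem is the main result of \cite{Premet4} (see also \cite{Miura}) -- it is not something one simply ``extracts'' by running Proposition \ref{Prop:decomposition} in parallel for $(\g,e)$ and $(\lf,e_\lf)$, since the two decompositions live at different points and relating them is the substance of those papers. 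You should cite the induction theorem rather than claim to rederive it.

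Part (i) is where the genuine gap lies. First, your reduction-to-rigid strategy for (i) requires the induction map to preserve $C(e)$-invariance, and the component groups of $(\lf,e_\lf)$ and $(\g,e)$ are different groups with no obvious compatible actions; ``the construction proceeds from $Q$-equivariant data'' is not an argument. Second, your base case leans on a realization of classical-type W-algebras via (twisted) shifted Yangians for all rigid orbits in types $B,C,D$; such a presentation is not available in that generality, so the step would fail. The paper's proof of (i) avoids rigid orbits entirely and is structured quite differently: Brylinski's Kraft--Procesi construction \cite{Brylinski} produces, for every classical nilpotent orbit, a primitive ideal $\J\subset\U$ with $\Orb$ open in $\VA(\U/\J)$ and multiplicity \emph{one} of $\U/\J$ along $\Orb$; Moeglin's observation \cite{Moeglin2} (equivalently, Proposition \ref{Prop:up_dag}) then shows $\J=\I^\dagger$ for some $\I$ with $\dim\Walg/\I=1$; and the $C(e)$-invariance of $\I$ comes for free from the multiplicity computation of \cite{Wquant}, Theorem 1.2.2, combined with part (3) of Proposition \ref{Prop:up_dag}: the set of such $\I$ over $\J$ is a single $C(e)$-orbit, and multiplicity one forces that orbit to be a single point. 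This multiplicity-one mechanism is exactly what your proposal is missing for the invariance claim.
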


The first assertion was proved in \cite{Wquant}, Theorem 1.2.3. However, most ingredients there were not essentially new. The construction of an ideal $\J\subset \U$ such that $\Orb$ is open in $\VA(\U/\J)$
and the multiplicity of $\U/\J$ on $\Orb$ is 1 given there was first obtained by Brilynski in \cite{Brylinski}.
That such an ideal has the form $\I^\dagger$ for $\dim \Walg/\I=1$  was essentially observed by Moeglin in
\cite{Moeglin2}. Finally, the claim that $\I$ must be $C(e)$-invariant follows from \cite{Wquant}, Theorem 1.2.2.

Assertion (ii) follows from \cite{GRU} and \cite{Premet4}. According to \cite{GRU},
a one-dimensional $\Walg$-module exists for all rigid elements in $G_2,F_4,E_6,E_7$
and some (relatively small) rigid elements in $E_8$.
Here the term "rigid"  refers to the Lusztig-Spaltenstein induction,
see \cite{LS}. The main result of \cite{Premet4} is that the existence of a one-dimensional
module for a W-algebra is preserved by the induction. Hence (ii). The relation between 1-dimensional modules
and the Lusztig-Spaltenstein induction is also discussed in \cite{Miura}.

In fact, for a rigid element $e$ one can describe the set of ideals $\I^\dagger$ with $\dim \Walg/\I=1$
in terms of highest weights. Fix a Cartan subalgebra $\h\subset\g$ and a system $\Pi$ of simple roots. Recall that according to Duflo, every primitive ideal
in $\U$ has the form $J(\lambda):=\Ann_\U L(\lambda)$, where $L(\lambda)$ denotes the irreducible highest weight
module with highest weight $\lambda$ (we do not use the $\rho$-shift here, so $L(0)$ stands the trivial
one-dimensional module). An important remark is that $\lambda$ is not recovered from $J(\lambda)$ uniquely.

In \cite{Miura} we have found some conditions on $\lambda$ such that the ideals $J(\lambda)$
exhaust the set of ideals $\I^\dagger$ with $\dim\Walg/\I=1$ under the condition that the algebra
$\q=\z_\g(e,h,f)$ is {\it semisimple} that is always the case for rigid elements.
These conditions are (in a sense) combinatorial
provided $e$ is of principal Levi type.

In more detail, let $\t$ be a Cartan subalgebra in $\q$. Set $\lf:=\z_\g(\t)$
and let $L$ be the Levi subgroup of $G$ with Lie algebra $\lf$. Conjugating the triple
$(e,h,f)$ one may assume that $\h\subset\lf$ and that $\t$ contains a dominant element.
Let $\Walg^0$ denote the W-algebra constructed for the pair $(\lf,e)$. For this W-algebra
we have a map $\bullet^{\dagger_0}$ from the set of primitive ideals of finite codimension in $\Walg^0$
to the set of primitive ideals $\J_0\subset \U^0:=U(\lf)$ with $\VA(\U^0/\J_0)=\overline{Le}$.

We need a certain element $\delta\in \h^*$. Let $\Delta^{<0}$ denote the set of all negative
roots in $\Delta$ that are not roots of $\lf$. 
Set
\begin{equation}\label{eq:delta}
\delta:=\sum_{\alpha\in \Delta^{<0}, \langle\alpha,h\rangle=1}\frac{1}{2}\alpha+\sum_{\alpha\in \Delta^{<0},\langle\alpha,h\rangle\geqslant 2}\alpha.
\end{equation}

In \cite{Miura}, Subsection 5.3, we have proved the following result.

\begin{Cor}\label{Cor:1dim}
Suppose $\q$ is semisimple.
\begin{enumerate}
\item Let $\lambda\in \h^*$ satisfy the following four conditions:
\begin{itemize}
\item[(A)]  $\VA(\U^0/J_0(\lambda))=\overline{Le}$.
\item[(B)] $\dim \VA(\U/J(\lambda))\leqslant \dim \Orb$.
\item[(C)] $\lambda-\delta$ vanishes on the center $\z(\lf)$ of $\lf$.
\item[(D)] $J_0(\lambda)=\I_0^{\dagger_0}$ for some ideal $\I_0$ of codimension 1 in $\Walg^0$.
\end{itemize}
Then $J(\lambda)=\I^\dagger$ for some ideal $\I\subset\Walg$ of codimension 1.
\item For any $\I\subset \Walg$ of codimension 1 there is $\lambda\in \h^*$ satisfying
(A)-(D) and such that $J(\lambda)=\I^\dagger$.
\end{enumerate}
\end{Cor}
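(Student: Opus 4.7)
The plan is to derive this from the $\bullet^\dagger$ correspondence of Proposition \ref{Prop:up_dag}, applied in parallel to $\Walg$ and to $\Walg^0$, and linked by a Miura-type restriction $\Walg \to \Walg^0$ that intertwines the two correspondences up to the twist $\delta$ of (\ref{eq:delta}). The key structural input is that codimension-$1$ ideals restrict to codimension-$1$ ideals under the Miura map and induce back along its adjoint, and that both operations behave predictably on central characters.

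For direction (2), given $\I \subset \Walg$ of codimension $1$, Proposition \ref{Prop:up_dag} parts (1) and (2) tell me that $\I^\dagger = J(\mu)$ is primitive with $\VA(\U/\I^\dagger) = \overline{\Orb}$. Restricting the quotient $\Walg/\I$ via the Miura map to $\Walg^0$ produces a codimension-$1$ $\Walg^0$-ideal $\I_0$; the intertwining property identifies $\I_0^{\dagger_0}$ with $J_0(\lambda)$ for some $\lambda$ in the $W$-orbit of $\mu$, so that $J(\lambda) = J(\mu) = \I^\dagger$. This $\lambda$ automatically satisfies (A), (B), and (D); condition (C) is then forced by the fact that the Miura map twists central characters by $\delta$, where $\delta$ arises as the trace on $\h$ of the adjoint action on $\bigoplus_{i\geq 1}\g(i)$ modulo its $\lf$-part, with the factor $\frac{1}{2}$ on $\g(1)$ reflecting the Lagrangian halving used to define $\m$.

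For direction (1), given $\lambda$ satisfying (A)--(D), condition (D) produces $\I_0 \subset \Walg^0$ of codimension $1$; the induction adjoint to the Miura restriction (available without obstruction because $\q$ is semisimple) lifts $\I_0$ to an ideal $\I \subset \Walg$ of codimension $1$. Conditions (A), (B), and (C) together pin down the central character and associated variety of $\I^\dagger$, forcing $\I^\dagger = J(\lambda)$ rather than some other primitive ideal with associated variety $\overline{\Orb}$.

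The main obstacle is the $\delta$-shift in (C): deriving its precise form in (\ref{eq:delta}) requires a careful comparison of the Kazhdan filtrations on $\Walg$ and $\Walg^0$, together with the Harish-Chandra projection onto $U(\h)$, since the Miura map differs from a naive restriction by a quantum correction whose Harish-Chandra image is precisely $\delta$. Once this bookkeeping is in place, the rest of the argument is a formal unraveling of Proposition \ref{Prop:up_dag} and the Skryabin equivalence; in fact the full proof is carried out in this way in \cite{Miura}, Subsection 5.3.
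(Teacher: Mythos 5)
The paper does not prove this corollary; it only cites it, with the sentence ``In \cite{Miura}, Subsection 5.3, we have proved the following result.'' So there is no in-paper argument to compare your sketch against, and your closing sentence (``the full proof is carried out in this way in \cite{Miura}'') is, like the paper's, a deferral rather than a proof.

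That said, your sketch as written contains a structural claim I would push back on: you postulate an algebra homomorphism (``Miura-type restriction'') $\Walg\to\Walg^0$, together with an adjoint ``induction'' of ideals, and hang the whole argument on it. No such homomorphism between the W-algebras of $(\g,e)$ and $(\lf,e)$ is defined in this paper, and it is not the mechanism used in \cite{Miura}: the relevant input there is Lusztig--Spaltenstein/parabolic induction, developed on the W-algebra side by Premet in \cite{Premet4} and extended in \cite{Miura}. That construction passes from $\Walg^0$-modules to $\Walg$-modules via a parabolic and is not a ring map in either direction; in particular your step ``restricting $\Walg/\I$ via the Miura map to $\Walg^0$ produces a codimension-$1$ $\Walg^0$-ideal $\I_0$'' has no supplied construction, and the assertion that (A), (B), (D) are ``automatically satisfied'' is unargued. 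Your reading of $\delta$ as a $\rho$-type shift — essentially the half-trace of $\h$ on the part of $\bigoplus_{i\geq 1}\g(i)$ outside $\lf$, with the $\tfrac12$ on $\g(1)$ coming from the Lagrangian choice — is the right heuristic, and the parallel use of $\bullet^\dagger$ and $\bullet^{\dagger_0}$ is on target, but the connecting tissue between them in your write-up is the speculative Miura map rather than the induction machinery that actually does the work.
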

When $e$ is principal in $\lf$ the condition (A) means that $\lambda$ is antidominant for $\lf$,
while the condition (D) becomes vacuous. The condition (B) is still very difficult to check.

\section{Quantizations of nilpotent orbits}\label{SECTION_proofs}
This section is the main part of the paper. In Subsection \ref{SUBSECTION_Proof}
we prove Theorem \ref{Thm:main1}. In Subsection \ref{SUBSECTION_global_sections} we give a description of the algebra
of global sections of a quantization $\Dcal$ of $X$. Finally, in Subsection \ref{SUBSECTION_Moeglin}
we compare our results with Moeglin's, \cite{Moeglin2}.

\subsection{Proof of Theorem \ref{Thm:main1}}\label{SUBSECTION_Proof}
Recall that $G$ is a simply connected semisimple algebraic group, and $\g$ is its Lie algebra.

In this subsection we consider a $G$-equivariant covering $X$ of a nilpotent orbit $\Orb\subset \g\cong\g^*$.
Let $\Walg$ denote the W-algebra associated to $\Orb$.
Pick a point $\chi\in \Orb$ and a point $x\in X$ lying over $\chi$. Set $H:=G_x, \Gamma:=H/H^\circ\subset C(e)$.
We will construct mutually inverse bijections between the set of isomorphism classes of homogeneous Hamiltonian
$G$-equivariant quantum jet bundles and the set $\Id^1(\Walg)^\Gamma$.
Thanks to Corollary \ref{Cor:equiv} and the results of Subsection \ref{SUBSECTION_qcm}, this will imply
Theorem \ref{Thm:main1}.

Recall the flat sheaf $\Jet^\infty \U_\hbar$ on $X$.  We start with a standard lemma describing various properties
of $\Jet^\infty \U_\hbar$.

\begin{Lem}\label{Lem:Ush}
\begin{enumerate}
\item The fiber of $\Jet^\infty \U_\hbar$ at $x$ is naturally identified with $\U^\wedge_\hbar$.
\item Let $\Id(\Jet^\infty \U_\hbar)$ denote the set of homogeneous $G$-stable $\hbar$-saturated ideals in
$\Jet^\infty \U_\hbar$. Taking the fiber of an ideal at $x$ defines a bijection between $\Id(\Jet^\infty \U_\hbar)$
and $\Id(\U_\hbar^\wedge)^{\Gamma}$. The inverse map is given by $\J'_\hbar\mapsto \pi_*(\Str_G\widehat{\otimes} \J'_\hbar)^H$, where $\pi$ stands for the projection $G\twoheadrightarrow G/H$.
\item Any element of $\Id(\Jet^\infty \U_\hbar)$ is stable with respect to the connection $\widetilde{\nabla}$.
\end{enumerate}
\end{Lem}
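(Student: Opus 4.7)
The plan is to treat the three parts as instances of $G$-equivariant descent along $\pi\colon G \twoheadrightarrow G/H = X$, together with a direct identification of the specialization of $\widetilde{I}_{\mu,\Delta}$ at $x$. For part (1), I first compute the fiber at $x$ of the map $\Str_X \otimes S(\g) \to \Str_X \otimes \K[X]$, $f \otimes p \mapsto f \otimes \mu^*(p)$, underlying the definition of $\widetilde{I}_{\mu,\Delta}$: it becomes the evaluation $S(\g) \to \K$, $p \mapsto p(\chi)$, whose kernel is the maximal ideal $I_\chi$ of $\chi = \mu(x)$. Hence the specialization of $\widetilde{I}_{\mu,\Delta}$ at $x$ is $\widetilde{I}_\chi \subset \U_\hbar$. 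Each quotient $\Str_X \otimes \U_\hbar/(\Str_X \otimes \U_\hbar)\widetilde{I}_{\mu,\Delta}^k$ is coherent over $\Str_X$ with surjective transition maps, so taking the fiber at $x$ commutes with the inverse limit and yields $(\Jet^\infty \U_\hbar)_x = \varprojlim_k \U_\hbar/\widetilde{I}_\chi^k = \U_\hbar^\wedge$.

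For part (2), the key input is $G$-equivariant descent: any $G$-equivariant pro-coherent sheaf of $\Str_X[[\hbar]]$-algebras $\mathcal{F}$ on $X = G/H$ is recovered from the $H$-action on its fiber $F = \mathcal{F}_x$ via $\mathcal{F} = \pi_*(\Str_G \widehat{\otimes} F)^H$, and this equivalence carries $G$-stable closed two-sided ideals in $\mathcal{F}$ to $H$-stable closed two-sided ideals in $F$. Applied to $\mathcal{F} = \Jet^\infty \U_\hbar$ with $F = \U_\hbar^\wedge$ by part (1), this produces the claimed inverse map. It remains to reduce $H$-stability to $\Gamma$-stability: the infinitesimal $G$-action on $\U_\hbar$ is the adjoint action $\xi \mapsto \hbar^{-2}[\xi,\cdot]$, which preserves every two-sided ideal; by continuity it extends to $\U_\hbar^\wedge$ and preserves every closed (in particular every $\hbar$-saturated) two-sided ideal, so $H^\circ$ acts trivially on $\Id_\hbar(\U_\hbar^\wedge)$ and the $H$-action factors through $\Gamma$. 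The $\K^\times$-equivariance (homogeneity on one side, $\K^\times$-stability from the definition of $\Id_\hbar(\U_\hbar^\wedge)$ on the other) is matched by combining the sheaf $\K^\times$-action on $\Jet^\infty \U_\hbar$ with the $\gamma$-twist reidentifying the fiber at $t.x$ with the fiber at $x$, which recovers precisely the Kazhdan $\K^\times$-action on $\U_\hbar^\wedge$.

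For part (3), I use equation (\ref{eq:Dsh_connection}), which gives $\widetilde{\nabla}_{\xi_X} = \xi_{\Dsh} - \hbar^{-2}[\Phi_\hbar(\xi),\cdot]$ for every $\xi \in \g$. For a $G$-stable two-sided ideal $\mathcal{I} \subset \Jet^\infty \U_\hbar$, the first term preserves $\mathcal{I}$ by infinitesimal $G$-stability, and the second preserves it as an inner derivation of a two-sided ideal, so $\widetilde{\nabla}_{\xi_X}$ preserves $\mathcal{I}$. Since $X$ is $G$-homogeneous the fundamental vector fields $\xi_X$, $\xi \in \g$, generate the tangent sheaf $\mathcal{T}_X$ over $\Str_X$, and $\widetilde{\nabla}_\bullet$ is $\Str_X$-linear in the vector field slot, so $\widetilde{\nabla}_\eta(\mathcal{I}) \subset \mathcal{I}$ for every vector field $\eta$ on $X$. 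The main obstacle is the descent step in (2): verifying that the equivalence between $G$-equivariant pro-coherent $\Str_X[[\hbar]]$-algebras on $G/H$ and complete topological $H$-algebras is genuinely compatible with the formation of closed two-sided ideals and with the Kazhdan $\K^\times$-structure at the non-$\K^\times$-fixed point $x$ is a careful bookkeeping exercise in the various topologies rather than a conceptual difficulty.
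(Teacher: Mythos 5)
Your proposal follows essentially the same route as the paper in all three parts: the fiber computation for (1) via right-exactness/surjectivity of the transition maps in the completion, $G$-equivariant descent along $G\twoheadrightarrow G/H$ for (2), and for (3) the identity $\widetilde{\nabla}_{\xi_X}=\xi_{\Dsh}-\frac{1}{\hbar^2}[\Phi_\hbar(\xi),\cdot]$ combined with the fact that the vector fields $\xi_X$ generate the tangent sheaf. Two small points that the paper makes explicit and you gloss over: in (3), stability of $\Ish$ under $\frac{1}{\hbar^2}[\Phi_\hbar(\xi),\cdot]$ uses not only that $\Ish$ is a two-sided ideal but also that it is $\hbar$-saturated (one divides $[\Phi_\hbar(\xi),a]\in\Ish\cap\hbar^2\Jet^\infty\U_\hbar$ by $\hbar^2$ and needs the result to remain in $\Ish$); and in (2), applying the descent equivalence to $\Jet^\infty\U_\hbar/\Ish$ requires checking that $\Ish$ is closed, so that the quotient is again pro-coherent --- this is precisely the ``bookkeeping'' you defer, and the paper settles it by the fact that every left ideal in $\Jet^\infty\U_\hbar$ is closed (Lemma 2.4.4 in \cite{HC}).
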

\begin{proof}
Assertion (1) follows from the observation that the completion functor is right exact.

Let us proceed to the proof of (3).
Recall the equality $\xi_{\Jet^\infty \U_\hbar}=\widetilde{\nabla}_{\xi_X}+\frac{1}{\hbar^2}[\xi,\cdot],\xi\in\g$. Let
$\Ish\in \Id(\Jet^\infty \U_\hbar)$. Being an $\hbar$-saturated two-sided ideal, $\Ish$ is stable with respect to
$\frac{1}{\hbar^2}[\xi,\cdot]$. Being $G$-stable, $\Ish$ is stable with respect to $\xi_{\Jet^\infty \U_\hbar}$.
 So $\Ish$ is $\widetilde{\nabla}_{\xi_X}$-stable.
But the vector fields $\xi_{X}$ generated the tangent sheaf of $X$. So $\Ish$ is $\widetilde{\nabla}$-stable.

Let us prove (2). First of all, we recall that $\Jet^\infty \U_\hbar$ is a  $G$-equivariant pro-coherent sheaf
of $\Str_X$-algebras. Consider the category of all such algebras. Then the functor of taking the fiber
at $x$ defines an equivalence between this category and  the category of $H$-equivariant
pro-finite dimensional algebras. A quasiinverse equivalence is $\mathcal{A}\rightarrow \pi_*(\Str_G\widehat{\otimes} \A)^H$.

It remains to prove that $\Jet^\infty \U_\hbar/\Ish$ is pro-coherent for any $\Ish\in \Id(\Ush)$. This will follow if we check
that $\Ish$ is closed in $\Jet^\infty \U_\hbar$.
But any left ideal in $\Jet^\infty \U_\hbar$ is closed, compare with Lemma 2.4.4 in \cite{HC}, this lemma generalizes
to the sheaf setting directly.
\end{proof}

Now let $\Dsh$ be a homogeneous Hamiltonian $G$-equivariant quantum jet bundle on $X$.

\begin{Lem}\label{Lem:surjectivity}
The morphism $\Phi_\hbar:\Jet^\infty \U_\hbar\rightarrow \Dsh$ is surjective.
\end{Lem}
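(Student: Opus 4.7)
The plan is to reduce surjectivity of $\Phi_\hbar$ to a computation on the fiber at $x$, then prove the fiber map is surjective modulo $\hbar$, and finally bootstrap via $\hbar$-adic completeness. Both $\Jet^\infty\U_\hbar$ and $\Dsh$ are $G$-equivariant pro-coherent sheaves of $\Str_X[[\hbar]]$-algebras on $X=G/H$, and $\Phi_\hbar$ is $G$-equivariant (the underlying comoment map $\varphi_\hbar$ is $G$-equivariant by Proposition \ref{Prop:qcm_exist}, and both the extension by $\Str_X$-linearity and the continuous completion preserve equivariance). By the fiber equivalence invoked in Lemma \ref{Lem:Ush}(2), a morphism between such sheaves is surjective if and only if it is surjective on the fiber at $x$. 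This reduces matters to showing that the induced map $\Phi_{\hbar,x}\colon\U^\wedge_\hbar\to\Dsh|_x$ is surjective.

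For the mod-$\hbar$ statement, the induced map on fibers becomes the homomorphism $\K[\g^*]^\wedge_\chi\to\Str^\wedge_{X,x}$ obtained by completing $\mu^*\colon\K[\g^*]\to\K[X]$ at the appropriate ideal. I would factor $\mu$ as $X\xrightarrow{\eta}\Orb\hookrightarrow\g^*$, where $\eta$ is étale because $X$ is a $G$-equivariant covering of $\Orb$. Then $\eta$ induces an isomorphism $\Str^\wedge_{\Orb,\chi}\xrightarrow{\sim}\Str^\wedge_{X,x}$, while the locally closed immersion $\Orb\hookrightarrow\g^*$ yields a surjection $\K[\g^*]^\wedge_\chi\twoheadrightarrow\Str^\wedge_{\Orb,\chi}$. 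Composing these gives the required surjectivity modulo $\hbar$.

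To lift from mod $\hbar$ to the full map, I note that $\hbar\in\widetilde{I}_{\mu,\Delta}$ by the very definition of the latter as the preimage of an ideal in the mod-$\hbar$ reduction, so $\U^\wedge_\hbar$ is $\hbar$-adically complete and its natural topology refines the $\hbar$-adic one. The fiber $\Dsh|_x$ is likewise $\hbar$-adically complete and $\hbar$-torsion free. Given $d\in\Dsh|_x$, I inductively build $u_i\in\U^\wedge_\hbar$ so that $\Phi_{\hbar,x}(u_0+\hbar u_1+\cdots+\hbar^n u_n)\equiv d\pmod{\hbar^{n+1}\Dsh|_x}$; the series $u:=\sum_{i\ge 0}\hbar^i u_i$ converges in the natural topology on $\U^\wedge_\hbar$ (since $\hbar^i u_i\in\widetilde{I}_{\mu,\Delta}^i\U^\wedge_\hbar$) and $\Phi_{\hbar,x}(u)=d$ by continuity.

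The main obstacle is the mod-$\hbar$ surjectivity at the fiber: it hinges on simultaneously exploiting the étale property of the covering $X\to\Orb$ and the locally closed embedding $\Orb\hookrightarrow\g^*$ to identify the formal neighborhood of $x$ in $X$ with a quotient of the formal neighborhood of $\chi$ in $\g^*$. Once this classical surjectivity is in hand, the reduction to fibers is a formal application of Lemma \ref{Lem:Ush}, and the $\hbar$-adic lifting is a standard Nakayama-style argument.
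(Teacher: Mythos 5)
Your proposal is correct and follows essentially the same route as the paper: reduce to the fiber at $x$ using $G$-equivariance and pro-coherence, reduce further to the mod-$\hbar$ statement using $\hbar$-adic completeness of both fibers, and conclude from the fact that $\mu$ is the composition of the unramified covering $X\twoheadrightarrow\Orb$ with the locally closed embedding $\Orb\hookrightarrow\g^*$. You merely spell out the $\hbar$-adic lifting and the factorization of $\mu$ in more detail than the paper does.
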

\begin{proof}
Both $\Jet^\infty \U_\hbar$ and $\Dsh$ are $G$-equivariant and $\Str_X$-pro-coherent. Therefore it is enough to show that
the induced homomorphism of fibers at $x$ is surjective. Both fibers are complete and separated in the
$\hbar$-adic topology. Therefore it remains to prove the surjectivity modulo $\hbar$.
Here we have the homomorphism $\K[\g^*]^{\wedge}_\chi\rightarrow \K[X]^\wedge_x$ of the completions
induced by the comoment map $\mu^*:\K[\g^*]\rightarrow \K[X]$. But $\mu$ is just the composition
of the covering $X\twoheadrightarrow \Orb$ and the inclusion $\Orb\hookrightarrow \g^*$.
So $\mu$ is unramified.  Hence the surjectivity claim.
\end{proof}

Let $\Ish$ denote the kernel of $\Phi_\hbar$. This is a homogeneous $G$-stable $\hbar$-saturated
(and automatically closed) ideal in $\Jet^\infty\U_\hbar$. Let $\I'_\hbar$ be the fiber of $\Ish$ at $x$. Then $\I'_\hbar$ is
a homogeneous $H$-stable $\hbar$-saturated ideal in $\U^\wedge_\hbar$. Then, by the construction, $(\U_\hbar^\wedge/\I'_\hbar)/\hbar(\U_\hbar^\wedge/\I'_\hbar)=\K[\Orb]^\wedge_\chi$. Set $\I_{\Dsh}:=\I'_{\hbar\#}$.  By  Lemma \ref{Lem:maps_btw_ideals}, $\dim \Walg/\I_{\Dsh}=1$. The inclusion $\I_{\Dsh}\in \Id(\Walg)^\Gamma$ follows from Remark
\ref{Rem:ideal_stability}.
 So we have got a map in one direction.

Let us describe a map in the opposite direction. Pick $\I\in \Id(\Walg)^\Gamma$.
Remark \ref{Rem:ideal_stability} implies $\I^{\#}$ is $H$-stable.
Also there is a natural isomorphism $\theta:(\U^\wedge_\hbar/\I^{\#})/\hbar (\U^\wedge_\hbar/\I^{\#})
\xrightarrow{\sim} \K[\Orb]^\wedge_\chi=\K[X]^\wedge_x$. Now let $\Ish$ be the ideal in
$\Jet^\infty \U_\hbar$ corresponding to $\I^{\#}$. Set $\Dsh_\I:=\Jet^\infty \U_\hbar/\Ish$.
In other words, $\Dsh_\I=\pi_*(\Str_G\widehat{\otimes} \U^\wedge_\hbar/\I^{\#})^H$.
By assertion (3) of Lemma  \ref{Lem:Ush}, $\Ish$ is $\widetilde{\nabla}$-stable.
It follows the sheaf $\Dsh_\I$ comes equipped with a flat
connection $\widetilde{\nabla}$ induced from the connection on $\Jet^\infty \U_\hbar$. Since
$\Jet^\infty \Str_X=\pi_*(\Str_G\widehat{\otimes} \K[X]^\wedge_x)^H$, we see that $\theta$
gives rise to an isomorphism $\Theta: \Dsh_\I/\hbar \Dsh_\I\xrightarrow{\sim}\Jet^\infty \Str_X$.
It is straightforward to verify that $(\Dsh_\I,\widetilde{\nabla},\Theta)$ is a homogeneous
Hamiltonian $G$-equivariant quantum jet bundle.

Also it is clear that the maps $\Dsh\mapsto \I_{\Dsh}, \I\mapsto \Dsh_\I$
are mutually inverse. This completes the proof of Theorem \ref{Thm:main1}.

\subsection{Global sections}\label{SUBSECTION_global_sections}
 Let $\Dcal$ be a homogeneous Hamiltonian $G$-equivariant quantization of
$X$. The goal of this subsection is to describe the algebra $\Gamma(X,\Dcal)$ of global sections.

Let $\I'_\hbar\in \Id_\hbar(\U^\wedge_\hbar)$. We say that an element $a\in \U^\wedge_\hbar/\I'_\hbar$
is  {\it finite}  if it lies in a finite dimensional $\g$- and $\K^\times$-stable subspace.
It is clear that all finite elements form a subalgebra in $\U^\wedge_\hbar/\I'_\hbar$.
We denote this subalgebra by $(\U^\wedge_\hbar/\I'_\hbar)_{fin}$.

Let $x\in X,\Gamma\subset C(e)$ be as above. Let us introduce a $\Gamma$-action on $(\U^\wedge_\hbar/\I'_\hbar)_{fin}$. The last algebra has two
$H$-actions: the action $\rho$ induced from the $H$-action on $\U^\wedge_\hbar/\I'_\hbar$
and the action $\rho'$ restricted from the $G$-action on $(\U^\wedge_\hbar/\I'_\hbar)_{fin}$.
Similarly to \cite{HC}, Subsection 3.2, $\rho\circ\rho'^{-1}$ descends to a $\Gamma=H/H^\circ$-action on $(\U^\wedge_\hbar/\I'_\hbar)_{fin}$ commuting with $G\times \K^\times$.

The main result is as follows.

\begin{Prop}\label{Prop:sections}
Let $\I'_\hbar(=\I_{\Dcal}^{\#})$ be the ideal in $\U_\hbar^\wedge$ corresponding to $\Dcal$.
The algebra $\Gamma(X,\Dcal)$  is naturally identified with
the $\hbar$-adic completion of $(\U^\wedge_\hbar/\I'_\hbar)_{fin}^\Gamma$.
\end{Prop}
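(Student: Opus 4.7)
The plan is to pass to the jet bundle picture and describe global flat sections explicitly. By Proposition \ref{Prop:jet1} one has $\Dcal = \Dsh^{\widetilde{\nabla}}$ for $\Dsh := \Jet^\infty\Dcal$; the construction in the proof of Theorem \ref{Thm:main1} identifies $\Dsh$ with $\pi_*(\Str_G\widehat{\otimes} A)^H$, where $A := \U^\wedge_\hbar/\I'_\hbar$ and the ideal $\Ish\subset \Jet^\infty\U_\hbar$ corresponds to $\I'_\hbar$ via Lemma \ref{Lem:Ush}(2). Hence
\[
\Gamma(X,\Dcal) = \bigl((\K[G]\widehat{\otimes} A)^H\bigr)^{\widetilde{\nabla}},
\]
and the task reduces to identifying this space with the $\hbar$-adic completion of $A_{fin}^\Gamma$.

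Next I would construct a natural map $\Psi: A_{fin}^\Gamma \to \Gamma(X,\Dcal)$. Given $a \in A_{fin}^\Gamma$, pick a finite-dimensional $\g$- and $\K^\times$-stable subspace $V\subset A_{fin}$ containing $a$; since $G$ is simply connected semisimple, the $\g$-action on $V$ via $\rho'$ integrates to a $G$-action, and I set $\Psi(a)$ to be the $A$-valued regular function $g\mapsto \rho'(g)(a)$ on $G$. The $H$-equivariance needed for $\Psi(a)$ to descend to a section of $\Dsh$ over $G/H = X$ follows because $\rho$ and $\rho'$ have the same differential (so agree on $H^\circ$), with the $\Gamma$-invariance of $a$ absorbing the component group. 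Flatness of $\Psi(a)$ then reduces via equation (\ref{eq:Dsh_connection}) to the coincidence of the infinitesimal $G$-action on $\Psi(a)$ and the inner action through $\Phi_\hbar$: both act pointwise on $a$ by $\rho'(\xi)$.

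Injectivity of $\Psi$ is immediate by evaluation at $g=e$. For the reverse identification, any flat section $s\in\Gamma(X,\Dsh)^{\widetilde{\nabla}}$ is pinned down by $a := s(e)\in A$ via the flatness equation $\xi_{\Dsh}s = \tfrac{1}{\hbar^2}[\Phi_\hbar(\xi),s]$, which forces $s(g) = \rho'(g)(a)$, and $H$-equivariance then forces $a\in A^\Gamma$. The subtle point is the ``finite'' condition: $s(e)$ need not a priori lie in $A_{fin}$, but its reductions modulo $\hbar^n$ do lie in $(A/\hbar^n A)_{fin}$. This would follow because the sheaf $\Dsh/\hbar^n\Dsh$ is $G$-equivariant and pro-coherent, so its global sections decompose by Peter--Weyl into $G$-isotypic pieces with finite-dimensional multiplicities; assembling over $n$ places $s(e)$ in the $\hbar$-adic completion of $A_{fin}^\Gamma$.

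The main obstacle will be the topological bookkeeping: verifying that $\Psi$ has image dense in $\Gamma(X,\Dcal)$, and that passing to the $\hbar$-adic completion of $A_{fin}^\Gamma$ fills in precisely the flat sections. This will require careful use of pro-coherence of $\Dsh$, $\K^\times$-equivariance, and the Peter--Weyl decomposition to ensure that no global flat sections escape this construction.
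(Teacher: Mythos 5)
Your proposal is correct and rests on the same mechanism as the paper's proof: identify $\Gamma(X,\Dcal)$ with the flat sections of $\Dsh=\Jet^\infty\Dcal=\pi_*(\Str_G\widehat{\otimes} A)^H$ where $A=\U^\wedge_\hbar/\I'_\hbar$, use the flatness equation (\ref{eq:Dsh_connection}) to trade $\widetilde{\nabla}_{\xi_X}$ for $\xi_{\Dsh}-\frac{1}{\hbar^2}[\Phi_\hbar(\xi),\cdot]$, and detect $\Gamma$-invariance through the discrepancy between the two $H$-actions $\rho$ and $\rho'$. The difference is directional: the paper restricts global sections to the fiber at $x$, proves injectivity by showing the kernel is a $G$-stable ideal, and then computes the image one multiplicity space at a time via the twisted bundles $\Dsh_L=\Dsh\otimes L^*$ and the restriction isomorphism $\Gamma(X,\Dsh_L)^G\xrightarrow{\sim}\Dsh_{L,x}^H$, whereas you build sections directly from finite fiber elements as orbit maps $g\mapsto\rho'(g)a$. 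Both routes work; the paper's $\Hom_{G\times\K^\times}(L,-)$ bookkeeping makes finiteness automatic, while your version must argue it separately. On that point one step is off as written: global sections of the pro-coherent sheaf $\Dsh/\hbar^n\Dsh$ do \emph{not} form a rational $G$-module (an inverse limit of rational modules need not be rational -- think of $\Gamma(X,\Jet^\infty\Str_X)$), so Peter--Weyl does not apply to them directly. You should instead apply it to $\Gamma(X,\Dcal/\hbar^n\Dcal)$, i.e.\ to the flat sections, which is rational because $\Dcal/\hbar^n\Dcal$ is a coherent $G$-equivariant $\Str_X$-module; this is precisely the density of $\Gamma(X,\Dcal)_{fin}$ in $\Gamma(X,\Dcal)$ that the paper invokes, and with that substitution your argument closes up.
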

\begin{proof}
First of all let us produce an algebra homomorphism $\Gamma(X,\Dcal)\rightarrow \U^\wedge_\hbar/\I'_\hbar$.
The algebra $\Gamma(X,\Dcal)$ coincides the algebra $\Gamma(X,\Dsh)^{\widetilde{\nabla}}$ of the global flat section
of $\Dsh:=\Jet^\infty \Dcal$. By the construction of the previous subsection, $\U^\wedge_\hbar/\I'_\hbar$
is the fiber $\Dsh_x$ of $\Dsh$ at $x$. A homomorphism we need is $\Gamma(X,\Dcal)\hookrightarrow \Gamma(X,\Dsh)\twoheadrightarrow \Dsh_x=\U^\wedge_\hbar/\I'_\hbar$.

Let us check that this homomorphism is injective. Let $K$ stand for the kernel.  It follows from
(\ref{eq:Dsh_connection}) that on $\Gamma(X,\Dcal)$
the derivation $\xi_{\Dsh}$ coincides with $\frac{1}{\hbar^2}[\Phi_\hbar(\xi),\cdot]$.
Being an $\hbar$-saturated two-sided ideal in $\Gamma(X,\Dcal)$, the kernel $K$ is $\xi_{\Dsh}$-stable
for any $\xi\in\g$. This means that $K$ is $G$-stable. Therefore any element of $K\subset \Gamma(X,\Dsh)$
vanishes in every point of $X$. So $K=\{0\}$.

The subalgebra $\Gamma(X,\Dcal)_{fin}$ of finite elements of  $\Gamma(X,\Dcal)$ is dense
in the $\hbar$-adic topology. So it remains to check that the embedding $\Gamma(X,\Dcal)\hookrightarrow
\U^\wedge_\hbar/\I'_\hbar$ maps $\Gamma(X,\Dcal)_{fin}$ onto $(\U^\wedge_\hbar/\I'_\hbar)_{fin}^\Gamma$.

Pick an irreducible $G\times \K^\times$-module $L$. It is enough to show
a natural map
\begin{equation}\label{eq:map}\Hom_{G\times \K^\times}(L, \Gamma(X,\Dcal))\hookrightarrow \Hom_{G\times \K^\times}(L,(\U^\wedge_\hbar/\I'_\hbar)_{fin})\end{equation}
is an isomorphism onto $\Hom_{G\times \K^\times}(L,(\U^\wedge_\hbar/\I'_\hbar)_{fin}^\Gamma)$.

Consider the bundle $\Dsh_L:=\Dsh\otimes_{\K}L^*$ on $X$. This is a $\Dsh$-bimodule
(the direct sum of several copies of $\Dsh$).
 We have a connection $\widetilde{\nabla}_L:=\widetilde{\nabla}\otimes \operatorname{id}$.
 Of course, $\Dsh_L^{\widetilde{\nabla}_L}=\Dcal\otimes L^*$ and so $\Gamma(X,\Dsh_L)^{\widetilde{\nabla}_L}=\Gamma(X,\Dcal\otimes L^*)=\Hom(L,\Gamma(X,\Dcal))$.
 The group $G\times \K^\times$ acts naturally on $\Dsh_L$. This action gives rise
 to a map $\xi\mapsto \xi_{\Dsh_L}:\g\rightarrow \operatorname{End}(\Dsh_L)$. Then
 \begin{equation}\label{eq:connection_L}
\xi_{\Dsh_L}=\widetilde{\nabla}_{L,\xi_X}+ \frac{1}{\hbar^2}[\Phi_\hbar(\xi),\cdot]+\xi_{L^*}.
\end{equation}
Consider the restriction map $\Gamma(X,\Dsh_L)\rightarrow \Dsh_{L,x}$. Since $\Dsh_L$ is a pro-coherent
$G$-equivariant $\Str_X$-module, the restriction map descends to an isomorphism
\begin{equation}\label{eq:isomorphism_L}\Gamma(X,\Dsh_L)^G\xrightarrow{\sim} \Dsh_{L,x}^H= (\U^\wedge_\hbar/\I'_\hbar\otimes L^*)^H.\end{equation} Thanks to (\ref{eq:connection_L}),
on $\Gamma(X,\Dsh_L)^G$  the connection $\widetilde{\nabla}_L$ coincides with
$-\frac{1}{\hbar^2}[\Phi_\hbar(\xi),\cdot]-\xi_{L^*}$. Under the isomorphism (\ref{eq:connection_L})
the last operator corresponds to $-\frac{1}{\hbar^2}[\xi,\cdot]-\xi_{L^*}$. So, restricting
(\ref{eq:isomorphism_L}) to $\Gamma(X,\Dcal_L)^G$, we get an isomorphism
\begin{equation}\label{eq:iso}\Gamma(X,\Dcal_L)^G\xrightarrow{\sim} (\U_\hbar^\wedge/\I'_\hbar\otimes L^*)^\g \cap
(\U_\hbar^\wedge/\I'_\hbar\otimes L^*)^H,\end{equation} where $\g$ acts on
$\U_\hbar^\wedge/\I'_\hbar\otimes L^*$ by $\xi\mapsto \frac{1}{\hbar^2}[\xi,\cdot]+ \xi_{L^*}$.

Take $\K^\times$-invariants in (\ref{eq:iso}). The left hand side becomes $\Hom_{G\times\K^\times}(L,\Gamma(X,\Dcal))$.
By the definition of  the  $\Gamma$-action on $(\U^\wedge_\hbar/\I'_\hbar)_{fin}$,  the space $\K^\times$-invariants
in the right hand side of  is nothing else but $\Hom_{G\times \K^\times}\left(L,(\U^\wedge_\hbar/\I'_\hbar)_{fin}^\Gamma\right)$.
The corresponding map
$$\Hom_{G\times\K^\times}\left(L,\Gamma(X,\Dcal)\right)\rightarrow \Hom_{G\times \K^\times}\left(L,(\U^\wedge_\hbar/\I'_\hbar)_{fin}^\Gamma\right)\hookrightarrow \Hom_{G\times \K^\times}(L,\left(\U^\wedge_\hbar/\I'_\hbar\right)_{fin})$$ coincides with (\ref{eq:map}).


\end{proof}

\subsection{Comparison with Moeglin's results}\label{SUBSECTION_Moeglin}
An alternative language to speak about quantizations of coverings of nilpotent orbits is that of {\it Dixmier algebras}.
Recall that a Dixmier algebra $\A$ over $\U=U(\g)$ is an associative algebra equipped with a
(rational) $G$-action and a $G$-equivariant homomorphism $\U\rightarrow \A$ such that
$\A$ is a finitely generated left $\U$-module. Then automatically the differential
of the $G$-action coincides with the adjoint action of $\g$.

Let us define a notion of a filtered Dixmier algebra {\it quantizing a covering of $\Orb$}.
By definition, this is a pair $(\A,\F_\bullet \A)$, where $\A$ is a Dixmier algebra
and $\F_i\A, i\geqslant 0,$ is a $G$-stable increasing exhaustive algebra filtration on $\A$
satisfying the following conditions:
\begin{itemize}
\item The induced filtration on the image of $\U$ is compatible with the filtration
induced from the PBW filtration of $\U$.
\item The associated graded algebra $\gr\A$ is a finitely generated commutative domain.
\item There is a $G$-equivariant embedding $\gr\A\hookrightarrow \K[\widetilde{\Orb}]$, where
$\widetilde{\Orb}$ is the universal covering of $\Orb$, intertwining the natural homomorphisms
$S(\g)\rightarrow \gr \A, S(\g)\rightarrow \K[\widetilde{\Orb}]$ (the homomorphism
$S(\g)\rightarrow \gr\A$ is induced from a linear map $\g\rightarrow \gr\A$
that comes from the Lie algebra homomorphism $\g\rightarrow \A$).
\end{itemize}

One can introduce a partial order on the set of isomorphism classes of filtered Dixmier algebras:
$(\A,\F_\bullet\A)\preceq (\A',\F_\bullet\A')$ if there is a $G$-equivariant embedding $\iota:\A\hookrightarrow
\A'$ that is strictly compatible with the filtrations: $\iota^{-1}(\F_i\A')=\F_i\A$ for all $i$.

Moeglin, \cite{Moeglin2}, related maximal filtered Dixmier algebras quantizing a covering of $\Orb$ to
primitive ideals in $\U$ "admitting a Whittaker model".  Let us explain her result.

Recall the subgroup $M\subset G$ and the   $\U$-module $\U/\U\m_\chi$
  equipped with a Kazhdan filtration, see Subsection \ref{SUBSECTION_W_generalities}.
We remark that each $\U$-submodule in $\U/\U\m_\chi$ is automatically $M$-stable.
Let $\J$ be a primitive ideal in $\U$. Following Moeglin,
we say that an irreducible quotient $N$ of $\U/\U\m_\chi$ is a {\it Whittaker
model} for $\J$ if $\J$ annihilates $N$ and $\gr N$ is isomorphic to $\K[M]$
as a graded $M$-module (the grading on $\K[M]$ was introduced in Subsection \ref{SUBSECTION_W_generalities}).   Under the Skryabin equivalence
a quotient $N$ of $\U/\U\m_\chi$ with $\gr N=\K[M]$ corresponds to a one-dimensional
$\Walg$-module. This follows from Lemma \ref{Lem:GG_filtr}.

Let $N$ be a Whittaker model for $\J$. Let $L(N,N)$ denote the space of $\g$-finite maps
$N\rightarrow N$. This is an algebra equipped with a homomorphism $\U \rightarrow L(N,N)$.
Consider the filtration $\F_\bullet$ on $L(N,N)$ induced by the  filtration on $N$: $\KF_i L(N,N)$
consists of all maps $\varphi$ such that $\varphi(\KF_j N)\subset \KF_{i+j}N$
for all $j$. Then, according to Moeglin,
the pair $(L(N,N), \F_\bullet L(N,N))$ is a maximal filtered Dixmier algebra quantizing a covering of
$\Orb$ (see \cite{Moeglin2}, Theorem 15).

Let us briefly explain the relation between our construction and Moeglin's.

First of all, let $(\A,\F_\bullet \A)$ be a filtered Dixmier algebra. Form the Rees algebra
$R_\hbar(\A)$ and complete it with respect to the $\hbar$-adic topology. Then we can localize
this completion on $\Spec(\gr \A)$ to get a sheaf of algebras. The restriction of
this sheaf to the open $G$-orbit $\widetilde{\Orb}$ is a homogeneous Hamiltonian $G$-equivariant quantization $\Dcal$ of $\widetilde{\Orb}$. Now let $X\twoheadrightarrow \widetilde{\Orb}$ be a $G$-equivariant covering of
$\widetilde{\Orb}$. It is possible to show that there is a unique quantization of
$X$ that lifts $\Dcal$ (in an appropriate). Also it is not difficult to see that
if $(\A,\F_\bullet\A)\preceq (\A',\F'_\bullet\A)$, then the quantizations of
the open $G$-orbit in $\Spec(\gr \A')$ given by $\A$ and by $\A'$ are isomorphic.

On the other hand, let $\Dcal$ be a quantization of the universal cover
$X\twoheadrightarrow \Orb$. Consider the algebra $\Gamma(X,\Dcal)$ and its
finite part $\Gamma(X,\Dcal)_{fin}$. Set $\A_\Dcal:=\Gamma(X,\Dcal)_{fin}/(\hbar-1)\Gamma(X,\Dcal)_{fin}$.
By Proposition \ref{Prop:sections},  $\A_{\Dcal}=(\Walg/\I)^{\widetilde{\dagger}}$
in the notation of \cite{HC}, Remark 3.5.1. Using the techniques of \cite{HC}
it is easy to show that $\A_{\Dcal}$ (with its natural filtration induced by
the $\K^\times$-action on $\Gamma(X,\Dcal)_{fin}$) is a maximal filtered Dixmier algebra
quantizing a covering of $\Orb$. The construction of the previous paragraph shows
 that, conversely, any maximal filtered Dixmier algebra
has the form $\A_{\Dcal}$.

\end{document}